\newtheorem{theoreme}{Theorem}
\newtheorem{proposition}{Proposition}
\newtheorem{lemme}[proposition]{Lemma}
\newtheorem{corollaire}[proposition]{Corollary}
\newtheorem{definition}[proposition]{Definition}
\newtheorem{remarque}[proposition]{Remark}
\numberwithin{equation}{section}
\numberwithin{proposition}{section}
\def\11{{\rm 1~\hspace{-1.4ex}l} }
\def\R{\mathbb R}
\begin{document}
\title[Supercritical wave equation] {Random data Cauchy theory for supercritical wave equations I: local theory}
\author{Nicolas Burq}
\address{Laboratoire de Math\'ematiques, B\^at. 425, Universit\'e Paris Sud,
  91405  Orsay Cedex, 
France et Institut Universitaire de France}
\email{nicolas.burq@math.u-psud.fr}
\author{Nikolay Tzvetkov}
\address{D\'epartement de Math\'ematiques, Universit\'e Lille I, 59655 Villeneuve d'Ascq Cedex, France}
\email{nikolay.tzvetkov@math.univ-lille1.fr}
\begin{abstract} 
We study the local existence of  strong solutions 
for the cubic nonlinear wave equation with data in $H^s(M)$, $s<1/2$, where $M$
is a three dimensional compact riemannian manifold. This problem is
supercritical and can be shown to be strongly ill-posed (in the Hadamard sense). However, after a suitable randomization,
we are able to construct local strong solution for a large set of initial data
in $H^s(M)$, where $s\geq 1/4$ in the case of a boundary less manifold and $s\geq
8/21$ in the case of a manifold with boundary.
\end{abstract}

\subjclass{ 35Q55, 35BXX, 37K05, 37L50, 81Q20 }
\keywords{nonlinear wave equation, eigenfunctions, dispersive equations}
\maketitle
%
\section{Introduction}
In the study of the local well-posedness of a nonlinear evolutionary PDE,
one often encounters the presence of a critical threshold for the
well-posedness theory. A typical situation is to have a method showing
well-posedness in Sobolev spaces $H^s$ where s is greater than a critical
index $s_{cr}$. This index is often related to a scale invariance (leading
to solutions concentrating at a point of the space-time) of the considered
equation.  In some cases (but not all), a good local well-posedness theory
is valid all the way down to the scaling regularity. On the other hand, at
least in the context of nonlinear dispersive equations, no reasonable
local well-posedness theory is known for any supercritical equation, i.e.
for data having less regularity than the scaling one. In fact, recently, several methods to show ill-posedness, or high frequency instability, for $s<s_{cr}$ emerged (see the works by Burq, G\'erard and Tzvetkov~\cite{BGT3, BGT2}, Lebeau~\cite{L} and Christ Colliander and Tao~\cite{ChCoTa03}). The goal of this paper is to give a class of equations for which, using probabilistic arguments, one
can still obtain a suitable well-posedness theory below the critical threshold.
Our model will be the cubic nonlinear wave equation posed on a compact manifold.

Let $(M,g)$ be a three dimensional compact smooth  riemannian manifold (without boundary) and let ${\mathbf \Delta}$
be the Laplace-Beltrami operator associated to the smooth metric $g$. For $s\in\R$, we denote by
$H^s(M)$ the classical Sobolev space equipped with the norm $\|u\|_{H^s(M)}=\|(1-{\mathbf \Delta})^{s/2}u\|_{L^2(M)}$.
Consider the following cubic wave equation 
\begin{equation}\label{1}
(\partial_{t}^{2}-{\mathbf \Delta})u+u^3=0,\quad (u,\partial_t u)|_{t=0}=(f_1,f_2)
\end{equation}
with  {\it real valued} initial data 
$
(f_1,f_2)\equiv f\in H^s(M)\times H^{s-1}(M)\equiv {\mathcal H}^{s}(M).
$\par

Using Strichartz estimates for the free evolution (see Section~2) one can show
that for $s>1/2$ the Cauchy problem (\ref{1}) is locally well-posed for data
in ${\mathcal H}^{s}(M)$. This means that for every $f\in {\mathcal H}^{s}(M)$
there exists $T>0$ and a unique solution $u$ of (\ref{1}), in a suitable
class, such that $(u,u_t)\in C([0,T];{\mathcal H}^{s}(M))$, i.e. the solution
$u$ represents a continuous curve in $H^s(M)$ (we call such a solution strong
solution since the classical construction of weak solutions does not yield the
continuity in time). Moreover, we can show that the time
existence $T$ may be chosen the same for all $f$ belonging to a fixed bounded
set $B$ of ${\mathcal H}^{s}(M)$ and the map $f\mapsto (u,u_t)$ is continuous
(and even Lipschitz continuous) from $B$ to $C([0,T];{\mathcal H}^{s}(M))$.

For $s=1/2$ one can still construct local strong solution for $f\in {\mathcal H}^{1/2}(M)$ 
but the dependence of $T$ on $f$ is more complicated and the Sobolev space
${\mathcal H}^{1/2}(M)$ is called critical space for (\ref{1}). 

For $s<1/2$, the argument to construct local solutions by Strichartz estimates
breaks down. Moreover one may show (see \cite{ChCoTa03}, \cite{L} for the case
of constant coefficient metrics or the appendix of this paper for the case of  
non constant coefficient metrics ) that 
if the initial data belong to ${\mathcal H}^s(M)$, $s<1/2$, 
the Cauchy problem (\ref{1}) is ill-posed in a strong sense: there exists
initial data 
$(f_1, f_2) \in \mathcal{H}^s(M)$ such that any {\em reasonable} solution of~\eqref{1}, i.e. satisfying the finite speed of propagation ceases {\em instantaneously}
to be in $\mathcal{H}^s$ for positive times (by finite speed of propagation,
we mean the fact that the value of the solution at $(x_0, t_0)$ depends only
on the values of the initial data on the set of points located at distance
smaller that $|t_0|$: $\{x : d_g(x, x_0)\leq |t_0|\}$).  
However, the functions for which one can prove such a pathological behavior
are highly non generic and a natural question is whether despite this result
one can still prove that the problem~\eqref{1} 
possesses local strong solutions for a ``large class
of functions'' in ${\mathcal H}^s(M)$, $s<1/2$. Our purpose in this paper is
precisely to give a positive answer to this question.
Let us observe that the possibility of such a phenomenon in the context of the nonlinear Schr\"odinger
equation (NLS) is studied in the last section of \cite{Tz2}. However, the situation in the context of NLS 
is much more involved and it would be interesting to decide whether the main result of this paper has an 
appropriate extension in the context of NLS (or other nonlinear PDE's).  

Let us first precise what we mean by ``a large class of initial data in
$\mathcal{H}^s(M)$''. 
Let $e_n\in C^{\infty}(M)$, $n=1,2,\ldots$ be an orthonormal basis of $L^2(M)$ 
constructed from real eigenfunctions of the operator
$-{\mathbf \Delta}$ associated to eigenvalues $ \lambda_n^2 $. Let
$((h_n(\omega),l_n(\omega))_{n=1}^{\infty}$ 
be a sequence of independent, $0$ mean value, real random variables on a
probability space $(\Omega,{\mathcal A},p)$ 
such that
\begin{equation}\label{norm}
\exists\, C>0\,:\, \forall\,n\geq 1,\,\,
\int_{\Omega}(|h_{n}(\omega)|^{4}+|l_{n}(\omega)|^{4})dp(\omega)<C\,.
\end{equation}
For  $f=(f_1,f_2)\in {\mathcal H}^s(M)$ given by 
\begin{equation*}
f_1(x)=\sum_{n=1}^{\infty}\alpha_n e_{n}(x),\quad
f_2(x)=\sum_{n=1}^{\infty}\beta_n e_{n}(x),\qquad \alpha_n, \beta_n \in \mathbb{R},
\end{equation*} 
we consider the map 
\begin{equation}\label{rand}
\omega\longmapsto f^{\omega}=(f_{1}^{\omega},f_2^{\omega})
\end{equation}
from $(\Omega,{\mathcal A})$  to
${\mathcal H}^s(M)$ equipped with the Borel sigma algebra, defined by
\begin{equation}\label{rand2}
f_1^\omega(x)=\sum_{n=1}^{\infty} h_{n}(\omega)\alpha_n e_{n}(x),\quad
f_2^\omega(x)=\sum_{n=1}^{\infty} l_{n}(\omega)\beta_n e_{n}(x)\,.
\end{equation}
Using (\ref{norm})
one can check that the map $\omega\mapsto f^{\omega}$ is measurable and  
$f^{\omega}\in L^{2}(\Omega;{\mathcal H}^s(M))$. 
Thus it defines a ${\mathcal H}^s(M)$ valued random variable. 
A simple computation (see Appendix~2) shows that if $h_n,l_n$ are identically distributed and different
from zero, or more generally if there exists $c>0$ such that the distributions $h_n, l_n$ satisfy
$$\limsup_{n\rightarrow + \infty}p(\{ |h_n| +|l_n| \leq c\})<1,$$
then, if $f$ does not belong to ${\mathcal H}^{s+\varepsilon}(M)$, for almost
all $\omega$, $f^{\omega}$ 
does not belongs to ${\mathcal H}^{s+\varepsilon}(M)$. Thus the randomization $\omega\mapsto f^{\omega}$ does not
give a regularization in the scale of the Sobolev spaces (but we shall
crucially exploit $L^p$ regularizations of this randomization).
Our main result reads as follows.
\begin{theoreme}\label{thm1} 
Assume that $\partial M = \emptyset$. Let us fix $s\geq\frac 1 4$ and $f=(f_1,f_2)\in {\mathcal H}^{s}(M)$.
Let $f^{\omega}\in L^{2}(\Omega;{\mathcal H}^s(M))$ be defined by the
randomization (\ref{rand}), (\ref{rand2}). Then there exists $\sigma\geq\frac 1 2$ such that for almost 
all $\omega\in \Omega$ there exist $T_\omega>0$ and a unique solution
to~\eqref{1} with initial data $f^{\omega}$ in a space continuously embedded in 
$$ 
X_\omega = \Big(\cos(t\sqrt{- {\mathbf \Delta}}) f_1^\omega+
\frac{\sin ( t \sqrt{- {\mathbf \Delta}})f_2^\omega}{ \sqrt{ - {\mathbf \Delta}}}\Big) + C([-T_\omega, T_\omega]; H^{\sigma}(M)).
$$
More precisely, there exist $C>0, \delta\geq 0$ ($\delta>0$ if $s>1/4$) and
for every 
$0<T\leq 1$,  an event $\Omega_{T}$ such that
\begin{equation}
\label{eq.prop}
p(\Omega_{T})\geq 1-CT^{1+ \delta}
\end{equation} and such that for every
$\omega\in\Omega_T$ there exists a unique solution $u$ of (\ref{1})
with data $f^{\omega}$ in a space continuously embedded in
$C([-T,T];H^s(M))$.
\\
Moreover, if $s>1/4$ and $h_n,l_n$ are standard real Gaussian or Bernoulli variables~\eqref{eq.prop} can be improved to
\begin{equation}
\label{eq.propbis}
p(\Omega_{T})\geq 1-Ce^{-c/T^\delta},\quad c,\, \delta>0.
\end{equation}
\end{theoreme}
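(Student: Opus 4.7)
The plan is to use the decomposition $u = u_L + v$, where
$$ u_L(t) = \cos(t\sqrt{-\Delta}) f_1^\omega + \frac{\sin(t\sqrt{-\Delta})}{\sqrt{-\Delta}} f_2^\omega $$
is the free linear evolution of the randomized data, and the remainder $v$ solves
$$ (\partial_t^2 - \Delta) v + (u_L + v)^3 = 0, \qquad (v, \partial_t v)|_{t=0} = (0,0). $$
The point is that $u_L$ lives only at the rough regularity $\mathcal{H}^s$, but after randomization it enjoys much better space-time integrability. I would search for $v$ at a higher, subcritical regularity $\sigma \in [1/2,1)$, where deterministic Strichartz theory applies, and treat the three terms involving $u_L$ in the expansion
$$(u_L+v)^3 = u_L^3 + 3 u_L^2 v + 3 u_L v^2 + v^3$$
as perturbations whose size is controlled probabilistically.

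\textbf{Step 1: probabilistic Strichartz.} Combining Khintchine's inequality (in the general case) or Gaussian/Bernoulli concentration (for the sharper bound (\ref{eq.propbis})) with the orthonormality of the $e_n$ and the deterministic Strichartz estimates of Section~2, one proves that for all $r<\infty$ and all Strichartz pairs $(p,q)$ in an admissible range,
$$ \bigl\| \|u_L\|_{L^p([-T,T];L^q(M))} \bigr\|_{L^r(\Omega)} \leq C\sqrt{r}\, T^{1/p}\, \|f\|_{\mathcal{H}^s}. $$
The decisive feature is that the range of admissible $(p,q)$ is much wider than the deterministic Strichartz window at regularity $s$, because in $L^r(\Omega)$ only the $L^2$ moments of $h_n$, $l_n$ contribute rather than a pointwise bound. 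Chebyshev with $r$ chosen as a large power then produces an event $\Omega_T$ on which $\|u_L\|_{L^p_T L^q_x}$ is bounded by a universal constant, with $p(\Omega_T^c)\leq C T^{1+\delta}$. For Gaussian or Bernoulli variables, exponential concentration upgrades this to the tail $e^{-c/T^\delta}$ of~(\ref{eq.propbis}).

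\textbf{Step 2: fixed point for $v$.} On the event $\Omega_T$, I would solve the Duhamel equation for $v$ by contraction in a Strichartz-type ball in $C([-T,T];H^\sigma(M))$ for some $\sigma\geq 1/2$. The term $v^3$ is handled by the standard subcritical cubic Strichartz theory. The mixed terms $u_L^2 v$ and $u_L v^2$ are estimated by Hölder in space-time, trading the probabilistic $L^p_T L^q_x$ norm of $u_L$ from Step~1 against Strichartz norms of $v$ at regularity $\sigma$, using a fractional Leibniz rule to place the $\sigma-1$ derivatives on $v$. The smallness needed for contraction is extracted from the $T^{1/p}$ factor in the probabilistic estimate.

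\textbf{Main obstacle: the pure forcing term $u_L^3$.} The hardest step, and the one that dictates the threshold $s\geq 1/4$, is to show that $u_L^3$ lies in $L^1([-T,T];H^{\sigma-1}(M))$ on the event $\Omega_T$ with a controlled norm: only the randomness is available here, since $v$ plays no role. A direct product estimate and Sobolev embedding reduce this to mixed $L^p_T L^q_x$ bounds on $u_L$ which are false deterministically for $s<1/2$; the probabilistic Strichartz estimate of Step~1 closes the gap exactly when $s\geq 1/4$, with a strictly positive margin (hence $\delta>0$) when $s>1/4$. Once this bound is in hand, the contraction closes in a ball of fixed radius, producing a unique $v\in C([-T,T];H^\sigma(M))$, hence a unique $u\in X_\omega$; the bounds (\ref{eq.prop})--(\ref{eq.propbis}) are inherited from the control of $\Omega_T$ in Step~1.
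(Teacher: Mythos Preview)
Your overall architecture---write $u=u_L+v$, obtain a probabilistic space–time bound on $u_L$, then run a deterministic contraction for $v$ at a subcritical regularity $\sigma\geq 1/2$---is exactly the paper's. But Step~1 as written has a real gap that would prevent you from reaching the threshold $s=1/4$.

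You say the probabilistic bound follows from ``Khintchine's inequality \dots\ with the orthonormality of the $e_n$ and the deterministic Strichartz estimates of Section~2''. After Minkowski (swap $L^r_\omega$ and $L^4_{t,x}$) and Khintchine pointwise, what you must control is the purely spatial square function
\[
\Big\|\Big(\sum_n|\alpha_n e_n(x)|^2\Big)^{1/2}\Big\|_{L^4(M)}.
\]
Orthonormality is an $L^2$ statement and gives nothing here, and applying deterministic Strichartz to $f^\omega$ (or to a single eigenfunction) only returns the $s$-admissible range, i.e.\ no improvement over $s=1/2$. The missing ingredient is Sogge's eigenfunction bound $\|e_n\|_{L^4(M)}\le C(1+\lambda_n^2)^{1/8}$ (Proposition~\ref{chris}); plugging it into the square function via the triangle inequality in $L^2$ is exactly what yields
\[
\Big\|\Big(\sum_n|\alpha_n e_n|^2\Big)^{1/2}\Big\|_{L^4(M)}^2\le \sum_n|\alpha_n|^2\|e_n\|_{L^4}^2\le C\sum_n|\alpha_n|^2(1+\lambda_n^2)^{1/4}=C\|f_1\|_{H^{1/4}}^2,
\]
and this is where the number $1/4$ comes from. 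Without Sogge you cannot close below $s=1/2$.

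Two smaller points. First, the paper does not place the nonlinearity in $L^1_TH^{\sigma-1}$ and does not use a fractional Leibniz rule; it puts $(u_L+v)^3$ directly into the dual Strichartz space $L^{p'}_TL^{q'}_x$ (with $(p,q)$ a $(1-\sigma)$-admissible pair, $p=4$), so the whole nonlinear estimate is a single H\"older inequality in $L^4_{t,x}$. In particular there is no need to expand $(u_L+v)^3$ and single out $u_L^3$ as ``the hardest term'': all four pieces are handled at once. Second, for $s=1/4$ the contraction carries no positive power of $T$; the factor $T$ in the probability bound comes instead from the $T^{1/4}$ in the averaging estimate (Proposition~\ref{prop.aver}) after Chebyshev, which is why $\delta=0$ at the endpoint and $\delta>0$ only for $s>1/4$.
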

Let us make several remarks.
\begin{remarque} 
{\rm
The result in Theorem~\ref{thm1} shows that in some sense the problem~\eqref{1} is 
well-posed for almost all initial data in ${\mathcal H}^{\frac 1 4 }(M)$, 
exhibiting a gain of $1/4$ derivatives with respect to the critical index $1/2$.
}
\end{remarque}
\begin{remarque}
{\rm
For any $f\in \mathcal{H}^s(M), 1/4 \leq s$, the map
$$ \omega \in \Omega \mapsto f^\omega \in \mathcal{H}^s(M)$$
endows naturally $\mathcal{H}^s(M)$ with a probability measure $\mu_f$.
It is straightforward to check that the solutions given by Theorem~\ref{thm1} satisfy the finite speed of propagation.  As a consequence, Theorem~\ref{thm1} implies that the set of initial data exhibiting the same kind of pathological behavior as the ones we  constructed in the appendix have measure $0$ for any measure $\mu_f$.
}
\end{remarque}
\begin{remarque} 
{\rm
Combining the ideas developed in this paper (and in particular~\eqref{eq.propbis}) with a global control on the flow given by invariant measures (which itself is related to the Hamiltonian nature of our equation, see e.g. our previous work~\cite{BuTz07} or \cite{Bo1,Bo2,Tz1,Tz2}) leads to {\em global} well posedness results for a class of {\em super critical wave equations} (see our forthcoming paper~\cite{BuTz07-2}). On the other hand, it would be interesting to decide whether the local in time result of Theorem~\ref{thm1} may be successfully combined with other global controls on the flow  such as conservation laws.
}
\end{remarque}
\begin{remarque}
{\rm
The method of proof consists in using the fact that though the initial data
have low regularity, their $L^p$ properties are (almost surely) much better than expected, allowing the
use of a fixed point method after having singled out the linear evolution. 
Let us note that such $L^p$ regularization phenomena are well-known in the
context of Fourier series since the work of Paley-Zygmund~\cite{PZ}.
Similar phenomena were recently studied by Ayache and the second author in the context of sums of type
(\ref{rand2}) in \cite{AT}.
}
\end{remarque}
\begin{remarque}
{\rm
In the improved time existence statement of Theorem~\ref{thm1} one may replace the assumption to deal
with Gaussian or Bernoulli's variables by the assumption~(\ref{property}) below.
}
\end{remarque}
\begin{remarque}
{\rm
For the sake of conciseness, we chose to focus on the cubic semi linear wave equation. However, the strategy presented here applies to arbitrary non linearities and allows to go beyond the usual critical threshold.
}
\end{remarque} 
Finally let us notice that our results extend to Dirichlet or Neumann boundary conditions. 
In this case, the deterministic Cauchy theory is much less well known.
\begin{definition} 
Let $(e_n)_{n=1}^{\infty}$ be the $L^2$-normalized basis consisting in eigenfunctions of the Laplace operator with Dirichlet (resp Neumann) boundary conditions, associated to eigenvalues $\lambda_n^2$.
The space $H^s_D(M)$ (resp $H^s_N(M)$) is the space of functions $f$ such there exists a sequence $(\alpha_n)_{n=1}^{\infty}$ such that 
\begin{equation}\label{mai}
f(x)= \sum_n \alpha _n e_n(x)
\end{equation}
with 
$$
\sum_{n}(1+\lambda_n)^{2s}|\alpha_n|^{2}<\infty\,.
$$
We shall denote by $\mathcal{H}^s_D(M)= H^s_D(M) \times H^{s-1}_D (M)$ (resp $\mathcal{H}^s_N= H^s_N(M) \times H^{s-1}_N (M)$).
\end{definition}
\begin{remarque}
{\rm
The space $H^s_D(M)$ coincides with the usual Sobolev space of order $s$ if $-1/2<s<1/2$ 
whereas $H^s_N(M)$ coincide with the usual Sobolev space of order $s$ if $-3/2<s<3/2$.
}
\end{remarque}
Consider now the wave equation
\begin{equation}\label{1bis}
(\partial_{t}^{2}-{\mathbf \Delta})u+u^3=0,\quad (u,\partial_t u)|_{t=0}=(f_1,f_2)
\end{equation}
with  {\it real valued} initial data $
(f_1,f_2)\equiv f\in H^s(M)\times H^{s-1}(M)\equiv {\mathcal H}^{s}(M),
$ and Dirichlet ($ u \mid_{\mathbb{R}_t \times \partial M} =0$) or Neumann
($ \frac{ \partial u } { \partial n} \mid_{\mathbb{R}_t \times \partial M} =0$) 
boundary conditions.
 
 From the results by Lebeau, Planchon and the first author~\cite{BuLePl06} (see also section~\ref{sec.bord}), one can show that the Cauchy problem is well posed in $\mathcal{H}_{D,N}^{2/3}(M)$. Here we show that almost surely, this result can be improved to $s=8/21<1/2$). 
\begin{theoreme}\label{thm2} 
Assume that $\partial M \neq \emptyset$, and that the random variables have sixth moments uniformly bounded
\begin{equation}\label{normbis}
\exists\, C>0\,:\, \forall\,n\geq 1,\,\,
\int_{\Omega}(|h_{n}(\omega)|^{6}+|l_{n}(\omega)|^{6})dp(\omega)<C\,.
\end{equation}
 Let us fix $s\geq\frac 8 {21}$ and $f=(f_1,f_2)\in {\mathcal H}^{s}_{D,N}(M)$.
Let $f^{\omega}\in L^{2}(\Omega;{\mathcal H}^s_{D,N}(M))$ be defined by the
randomization (\ref{rand}), (\ref{rand2}). Then there exists $\sigma\geq \frac 2 3$ such that for almost 
all $\omega\in \Omega$ there exist $T_\omega>0$ and a unique solution
to~\eqref{1bis} with initial data $f^{\omega}$ in a space continuously embedded in 
$$ 
X_\omega = \Big(\cos(t\sqrt{- {\mathbf \Delta_{D,N}}}) f_1^\omega+
\frac{\sin ( t \sqrt{- {\mathbf \Delta_{D,N}}})f_2^\omega}{ \sqrt{ - {\mathbf\Delta_{D,N}}}}\Big) + 
C([-T_\omega, T_\omega]; H^{\sigma}_{D,N}(M)).
$$
More precisely, there exists $C>0, \delta> 0$  and  for every $0<T\leq 1$,  an event $\Omega_{T}$ such that
\begin{equation}\label{prop1}
p(\Omega_{T})\geq 1-CT^{1+ \delta}
\end{equation} and such that for every
$\omega\in\Omega_{T}$ there exists a unique solution $u$ of (\ref{1})
with data $f^{\omega}$ in a space continuously embedded in
$C([0,T];H^s(M))$.

Moreover, if $h_n,l_n$ are standard real Gaussian or Bernoulli variables one
can improve~\eqref{prop1} to 
\begin{equation}\label{prop2}
p(\Omega_{T})\geq 1-Ce^{-c/T^\delta}, \quad c, \delta >0\,.
\end{equation}
\end{theoreme}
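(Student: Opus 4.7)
The overall scheme should parallel the proof of Theorem~\ref{thm1}, but with two modifications: the deterministic well-posedness result used as a black box is now that of Burq--Lebeau--Planchon~\cite{BuLePl06} in $\mathcal{H}^{2/3}_{D,N}(M)$, and the Strichartz exponents available near the boundary are weaker (which is why one only gains $2/3 - 8/21 = 2/7$ derivatives via randomization, rather than $1/4$). The plan is to write the solution as
\[
u(t) = u_F(t) + v(t), \qquad u_F(t) := \cos(t\sqrt{-\mathbf{\Delta}_{D,N}})f_1^\omega + \frac{\sin(t\sqrt{-\mathbf{\Delta}_{D,N}})}{\sqrt{-\mathbf{\Delta}_{D,N}}} f_2^\omega,
\]
isolating the free linear evolution of the randomized data, so that $v$ solves the Duhamel equation
\[
v(t) = -\int_0^t \frac{\sin((t-\tau)\sqrt{-\mathbf{\Delta}_{D,N}})}{\sqrt{-\mathbf{\Delta}_{D,N}}}\, (u_F+v)^3(\tau)\, d\tau,
\]
with the boundary conditions built into the spectral projector. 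The goal is to solve this fixed point for $v$ in a suitable Strichartz-type norm inside a ball of $C([-T,T]; H^\sigma_{D,N}(M))$ with $\sigma \geq 2/3$, which places $v$ in the regime where the deterministic theory of~\cite{BuLePl06} provides the needed Strichartz estimates.

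The core probabilistic input is a large-deviation bound for $u_F$: despite $f^\omega$ being only in $\mathcal{H}^{s}_{D,N}$ with $s = 8/21$, the orthogonality of the eigenfunctions combined with the hypothesis~\eqref{normbis} on sixth moments yields, via Paley--Zygmund/Khintchine type inequalities on the partial sums in~\eqref{rand2}, that $u_F$ lies almost surely in $L^p_t L^q_x$ spaces corresponding to a regularity gain of (up to) $1/2 - s$ derivatives compared to what $\mathcal{H}^s$ alone would give, on an event of probability $\geq 1 - CT^{1+\delta}$. The choice of $p = 6$ (dictated by the cubic nonlinearity and dual Strichartz) is exactly what forces the sixth-moment assumption~\eqref{normbis}, as opposed to the fourth-moment one~\eqref{norm} of Theorem~\ref{thm1}.

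Given these bounds, the fixed point is closed by expanding
\[
(u_F+v)^3 = u_F^3 + 3 u_F^2 v + 3 u_F v^2 + v^3
\]
and estimating each term. The pure stochastic term $u_F^3$ is placed in $L^1_t H^{\sigma-1}_{D,N}$ (or in a dual Strichartz space) using only the almost sure $L^p_tL^q_x$ bound on $u_F$; the fully deterministic term $v^3$ is handled exactly as in~\cite{BuLePl06} using Strichartz estimates valid on manifolds with boundary above the threshold $\sigma \geq 2/3$; the mixed terms $u_F^2 v$ and $u_F v^2$ are estimated by Hölder in space-time, combining the probabilistic bound on $u_F$ with the Strichartz norm of $v$. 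The arithmetic behind the threshold $s \geq 8/21$ comes from optimizing the indices so that the $L^p_t L^q_x$ gain available from randomization at regularity $s$ matches the loss incurred by the weaker boundary Strichartz exponents at regularity $\sigma = 2/3$; this balance (rather than any one estimate in isolation) is the main technical obstacle and the reason one cannot push the threshold below $8/21$ with the present method.

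For the sub-exponential tail~\eqref{prop2} in the Gaussian or Bernoulli case, one replaces the Markov/Chebyshev inequality used to deduce~\eqref{prop1} from moment bounds by a Khintchine (or Wiener chaos / hypercontractivity) estimate on the cubic expressions $u_F$, $u_F^2$, $u_F^3$, which upgrades polynomial tail bounds on the relevant $L^p_tL^q_x$ norms of $u_F$ into exponential ones; this step is purely probabilistic and follows~\cite{AT} as in Theorem~\ref{thm1}. Uniqueness in the class continuously embedded in the $X_\omega$ space of the statement is then automatic from the contraction, and measurability of $\Omega_T$ from the explicit construction of the fixed point.
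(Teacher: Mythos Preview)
Your overall architecture matches the paper's: split $u=u_F+v$, prove an averaging bound on $u_F$ via randomization, and run a contraction for $v$ in $X^{2/3}_T$ using the boundary Strichartz theory of~\cite{BuLePl06}. However, you misidentify the probabilistic input and its origin, and this is not a cosmetic point.

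The exponent governing the averaging effect is \emph{not} $p=6$ ``dictated by the cubic nonlinearity''; it is $p=5$, coming from the Smith--Sogge spectral cluster estimate $\|e_n\|_{L^5(M)}\leq C(1+\lambda_n^2)^{1/5}$ on manifolds with boundary~\cite{SmSo06}. This replaces Sogge's $L^4$ bound (Proposition~\ref{chris}) used in the boundaryless case, and is precisely the place where the boundary enters. The paper proves
\[
\|(-{\mathbf \Delta}+1)^{\frac s2-\frac15}u_f^\omega\|_{L^5(\Omega\times[0,T]\times M)}\leq CT^{1/5}\|f\|_{\mathcal H^s},
\]
and the sixth-moment hypothesis~\eqref{normbis} arises because Lemma~\ref{lem.k} requires $p\leq 2k$ with $k\in\mathbb N^*$, so reaching $p=5$ forces $k=3$. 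Your explanation via $p=6$ and dual Strichartz does not reproduce this, and without the Smith--Sogge $L^5$ input you have no eigenfunction bound to feed into the averaging lemma.

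The subsequent numerology is also more specific than your sketch suggests: the paper interpolates the $L^5$ averaging bound with the trivial $L^2$ bound to land in $L^{14/3}(\Omega\times[0,T]\times M)$ at $s=8/21$, then closes the contraction in $X^{2/3}_T$ using the boundary-admissible pairs $(21/4,14/3)$ (for $v$) and $(21/2,14/5)$ (so that the cube sits in $L^{21/19}_tL^{14/9}_x\subset Y^{1/3}_T$). The threshold $8/21$ is exactly what makes $L^{14/3}$ appear after interpolation; it is not an abstract ``balance'' but a concrete index computation. Your outline would become a proof once you replace the $p=6$ heuristic by the Smith--Sogge $L^5$ estimate and carry through these exponents.
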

The paper is organized as follows: in section~\ref{sec.stri} we recall Strichartz estimates for wave equations and Sogge's estimates for $L^p$ norms of spectral projectors. In Section~\ref{sec.dev} we prove a large deviation bound. In Section~\ref{sec.ave} we prove in some sense that ``randomization beats deterministic Strichartz estimates'' in terms of $L^p$ estimates. In Section~\ref{sec.fix} we perform a classical fixed point argument in a suitable space to prove Theorem~\ref{thm1}. Finally, in Section~\ref{sec.bord} we indicate how the previous argument have to be adapted in the case of a boundary value problem. In all the proof, we shall focus on positive times, the case of negative times being similar due to time reversibility.

{\bf Acknowledgments:} We thank H.~Queff\'elec for providing us the reference~\cite{PZ}.

\section{Strichartz and Sogge estimates}\label{sec.stri}
We shall assume in this section that the boundary of $M$ is empty and collect the Strichartz estimates for the free evolution and the Sogge estimates for
the eigenfunctions $e_{n}(x)$. These sets of estimates are actually in the same family, their proofs
being a combination of the Fourier integral operator approximation for hyperbolic problems and the
$TT^{\star}$ duality argument. 
Let us note that due to the finite speed of propagation for solutions to the wave equation, 
Strichartz estimates on a compact manifold are equivalent to some variable coefficient 
Strichartz estimates on $\R^3$.
We refer to  Kapitanskii~\cite{Ka} for the proof of the Strichartz estimate
we state bellow and to Sogge~\cite{So} for the bound on $e_{n}$ stated bellow.

The purpose of the next definition is to define the Strichartz spaces used for solving the problem (\ref{1}).
\begin{definition}
For $0\leq s < 1$, a couple of real numbers $(p,q), \frac 2 s\leq p\leq + \infty$ is $s$-admissible if
$$\frac{1}{p}+\frac{3}{q}=\frac{3}{2}-s.
$$
For $T>0$, $0\leq s < 1$, we define the spaces 
\begin{equation}\label{eq.espstri}
 X^s_T= C^0 ([0, T]; H^s( M)) \bigcap_{(p,q) \text{ $s$- admissible}} L^p((0, T) ; L^q(M))
\end{equation}
and its dual space
\begin{equation}\label{eq.spdual}
 Y^s_T= L^1 ([0, T]; H^{-s}(M)) +_{(p,q) \text{ $s$- admissible}} L^{p'}((0, T) ; L^{q'}(M))
\end{equation}
$(p',q')$ being the conjugate couple of $(p,q)$, equipped with their natural norms (notice that to define these spaces, we keep only the extremal couples corresponding to $p= 2/s$ and $p= + \infty$ respectively).
\end{definition} 
We next state the Strichartz inequality for the wave equation, posed on a three dimensional smooth 
compact (without boundary) riemannian manifold.
\begin{proposition}\label{str}
Let $(p,q)$ be an $s$-admissible couple. Then there exists $C>0$ such that for every $T\in]0,1]$, 
every $f\in H^{s}(M)$ one has
\begin{equation}\label{eq.homog}
\|e^{\pm it\sqrt{- {\mathbf \Delta}}}(f)\|_{L^{p}([-T,T];L^{q}(M))}\leq 
C\|f\|_{H^{s}(M)}\,.
\end{equation}
\end{proposition}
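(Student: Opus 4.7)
The plan is to follow the classical Littlewood--Paley/FIO strategy. First I would reduce via a frequency decomposition. Let $\Delta_N$ denote the spectral projector onto frequencies $|\xi|_g \sim N$ (dyadic $N = 2^j$), constructed using a smooth cutoff of $\sqrt{-\mathbf{\Delta}}$. Since the half-wave operator commutes with $\Delta_N$ and the admissible couples satisfy $p,q\geq 2$, Minkowski's inequality and the Littlewood--Paley characterization of $H^s(M)$ reduce \eqref{eq.homog} to the frequency-localized estimate
\begin{equation*}
\|e^{\pm it\sqrt{-\mathbf{\Delta}}} \Delta_N f\|_{L^p([-T,T];L^q(M))} \leq C N^s \|\Delta_N f\|_{L^2(M)}.
\end{equation*}

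Next I would prove this frequency-localized bound on a short time scale. Using a finite partition of unity subordinate to coordinate charts of size comparable to the injectivity radius, and invoking finite speed of propagation to keep the evolved data essentially localized in one chart on a time interval of length $1/N$, the problem is transferred to a variable-coefficient half-wave problem on $\mathbb{R}^3$. Setting $s=Nt$ one obtains a semiclassical problem with parameter $h=1/N$ on a unit time window. A WKB / Fourier integral operator parametrix can be constructed for $e^{ish^{-1}\sqrt{-h^2\mathbf{\Delta}}}$ on $|s|\le s_0$ (with $s_0$ small but fixed), whose phase solves the eikonal equation associated with the principal symbol $|\xi|_g$. Stationary phase on this parametrix yields the dispersive estimate
\begin{equation*}
\|e^{it\sqrt{-\mathbf{\Delta}}} \Delta_N f\|_{L^\infty(M)} \leq C\, N^2 |t|^{-1} \|\Delta_N f\|_{L^1(M)}, \qquad 0<|t|\leq 1/N,
\end{equation*}
and then the standard $TT^{\ast}$ argument with the Hardy--Littlewood--Sobolev inequality produces the full range of $s$-admissible Strichartz estimates on the time interval $[-1/N,1/N]$.

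Finally I would patch. Splitting $[-T,T]$ into $O(NT)$ intervals of length $1/N$, applying the local estimate on each piece, and summing in $L^p_t$ gives an extra factor $(NT)^{1/p}$; combined with the $N^s$ loss this is consistent with the admissibility relation $1/p+3/q=3/2-s$ for the range of $p$ allowed, so one recovers the claimed bound uniformly for $T\in(0,1]$. Summing the dyadic pieces via Littlewood--Paley (again using $p,q\geq 2$) completes the proof.

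The main obstacle is step two: the FIO parametrix construction and the stationary phase analysis. The eikonal equation only has a smooth solution up to the first conjugate time, which is why the semiclassical reduction to $|s|\leq s_0$ is crucial, and why one must use finite speed of propagation together with the coordinate partition to stay within a single chart. Once this short-time dispersive estimate is in place, the rest is bookkeeping. I would refer to Kapitanskii's work for the technical details of the FIO construction on a Riemannian manifold, as the paper indicates.
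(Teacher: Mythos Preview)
The paper does not give a proof of this proposition; it simply refers to Kapitanskii \cite{Ka}. Your outline is the standard parametrix/$TT^\ast$ route, but the patching step contains a genuine error and does not recover the lossless estimate \eqref{eq.homog}.

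On a window $|t|\le c/N$ the bound $\|e^{it\sqrt{-{\mathbf \Delta}}}\Delta_N\|_{L^1\to L^\infty}\le CN^2|t|^{-1}$ is never better than the trivial Bernstein bound $N^3$ (the crossover occurs precisely at $|t|\sim 1/N$), so $TT^\ast$ on that window yields only
\[
\|e^{it\sqrt{-{\mathbf \Delta}}}\Delta_N f\|_{L^p([-c/N,c/N];L^q)}\le CN^{s}\|\Delta_N f\|_{L^2},
\]
which is exactly what Sobolev embedding plus H\"older in $t$ already gives. Summing over $O(N)$ windows then produces $N^{s+1/p}\|\Delta_N f\|_{L^2}$, a genuine loss of $1/p$ derivatives; this is \emph{not} absorbed by the admissibility relation, which has already fixed the exponent $s$ on each window. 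The semiclassical rescaling $s=Nt$ is the correct move for a Schr\"odinger-type propagator (symbol homogeneous of degree $2$), where bicharacteristics at frequency $N$ travel $N$ times faster and the parametrix genuinely breaks down after time $\sim 1/N$ --- and indeed compact-manifold Schr\"odinger Strichartz estimates do carry a $1/p$ loss. For the wave equation the symbol $|\xi|_g$ is homogeneous of degree $1$, the bicharacteristic flow is the frequency-independent geodesic flow on $S^\ast M$, and the FIO parametrix with eikonal phase is valid on a \emph{fixed} interval $|t|\le c$ (governed by the injectivity radius / first conjugate time), uniformly in $N$. With the dispersive estimate available on $|t|\le c$, no patching is needed and the lossless bound follows directly; this is exactly what Kapitanskii proves. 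The fix to your argument is simply to drop the semiclassical rescaling and run the FIO construction on a unit time interval.
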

Let us now state a corollary of Proposition~\ref{str}.
\begin{corollaire}\label{cor.stri}
For every $0< s <1 $, every $s$-admissible couple $(p,q)$, there exists $C>0$ such that
for every $T\in]0,1]$, every $f\in \mathcal{H}^{s}(M), g\in Y^{1-s}_T$ one has
\begin{equation}\label{eq.homogbis}
\|\cos({t\sqrt{- {\mathbf \Delta}}})(f_1)\|_{X^s_T}+\|\frac{\sin({t\sqrt{- {\mathbf \Delta}}})}{ \sqrt{- \mathbf{\Delta}}}(f_2)\|_{X^s_T}\leq 
C\|f\|_{\mathcal{H}^{s}(M)},
\end{equation}
\begin{equation}\label{eq.inhomog}
\|\int_0^t \frac {\sin((t-\tau)
\sqrt{ - {\mathbf \Delta}})}{\sqrt{-{\mathbf \Delta}}} (g)(\tau) d\tau\|_{X^s_T}\leq C \|g\|_{Y^{1-s}_T}
\end{equation} 
\end{corollaire}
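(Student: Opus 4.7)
My plan is to derive both inequalities from Proposition~\ref{str} by (i) writing $\cos(t\sqrt{-{\mathbf \Delta}})$ and $\sin(t\sqrt{-{\mathbf \Delta}})/\sqrt{-{\mathbf \Delta}}$ as linear combinations of the unitary half-wave groups $e^{\pm it\sqrt{-{\mathbf \Delta}}}$, and (ii) using a standard $TT^{\star}$ and Christ--Kiselev argument for the Duhamel operator.

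For \eqref{eq.homogbis}, I would start from the Euler formulas
\[
\cos(t\sqrt{-{\mathbf \Delta}})=\tfrac{1}{2}\bigl(e^{it\sqrt{-{\mathbf \Delta}}}+e^{-it\sqrt{-{\mathbf \Delta}}}\bigr),\qquad
\frac{\sin(t\sqrt{-{\mathbf \Delta}})}{\sqrt{-{\mathbf \Delta}}}=\frac{1}{2i\sqrt{-{\mathbf \Delta}}}\bigl(e^{it\sqrt{-{\mathbf \Delta}}}-e^{-it\sqrt{-{\mathbf \Delta}}}\bigr).
\]
The $C^{0}([0,T];H^{s})$ component of the $X^{s}_{T}$-norm is then immediate because $e^{\pm it\sqrt{-{\mathbf \Delta}}}$ is an isometry on $H^{s}(M)$. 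For the Strichartz component, I would split off the zero mode of $-{\mathbf \Delta}$ (where $\sin(t\sqrt{-{\mathbf \Delta}})/\sqrt{-{\mathbf \Delta}}$ must be interpreted as $t\cdot{\rm Id}$, which is trivially bounded uniformly for $|t|\le 1$) and on the orthogonal complement replace $\sqrt{-{\mathbf \Delta}}^{-1}$ by $\langle\sqrt{-{\mathbf \Delta}}\rangle^{-1}$, so that $\sqrt{-{\mathbf \Delta}}^{-1}f_{2}$ has $H^{s}$-norm controlled by $\|f_{2}\|_{H^{s-1}}$. Applying Proposition~\ref{str} to each half-wave group and each admissible couple $(p,q)$ yields \eqref{eq.homogbis}.

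For \eqref{eq.inhomog}, I would first establish the untruncated estimate
\[
\Bigl\|\int_{0}^{T}\frac{\sin((t-\tau)\sqrt{-{\mathbf \Delta}})}{\sqrt{-{\mathbf \Delta}}}\,g(\tau)\,d\tau\Bigr\|_{L^{p}_{t}L^{q}_{x}} \le C\,\|g\|_{Y^{1-s}_{T}}.
\]
When $g\in L^{1}([0,T];H^{-s})$ this is just the homogeneous bound \eqref{eq.homogbis} applied pointwise in $\tau$ and integrated by Minkowski, since $\sqrt{-{\mathbf \Delta}}^{-1}g(\tau)\in H^{1-s}$ with norm $\|g(\tau)\|_{H^{-s}}$. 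When $g\in L^{\tilde p'}L^{\tilde q'}$ for a $(1-s)$-admissible couple $(\tilde p,\tilde q)$, the homogeneous Strichartz inequality can be rewritten as the boundedness of $T:L^{2}(M)\to L^{p}([-T,T];L^{q}(M))$ for every $s$-admissible $(p,q)$; by $TT^{\star}$ duality, $T T^{\star}:L^{\tilde p'}L^{\tilde q'}\to L^{p}L^{q}$, which is exactly the non-retarded form of the bound. Passing to the retarded integral $\int_{0}^{t}$ is done via the Christ--Kiselev lemma, valid because $p>p'$ (and $p>\tilde p'$) strictly for every admissible couple with $0<s<1$. Summing over the pieces of the decomposition of $g\in Y^{1-s}_{T}$ gives \eqref{eq.inhomog}.

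The main obstacle is precisely the passage from the non-retarded to the retarded Duhamel operator in \eqref{eq.inhomog} when the input and output admissible couples differ, as well as verifying that the strict inequality $p>p'$ required by Christ--Kiselev holds throughout the admissible range $2/s\le p\le \infty$; the endpoint $p=2/s$ with $s=1$ is excluded here by the restriction $s<1$ in the definition of admissible couples, so this point causes no difficulty. The treatment of the zero frequency and the identification $\sqrt{-{\mathbf \Delta}}^{-1}\leftrightarrow\langle\sqrt{-{\mathbf \Delta}}\rangle^{-1}$ on $(\ker\Delta)^{\perp}$ is a minor bookkeeping issue that becomes routine once the spectral decomposition is used.
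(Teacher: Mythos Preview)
Your approach is correct and is the standard derivation; the paper itself does not spell out a proof but simply refers to \cite{BuTz07}, noting only that the zero eigenvalue of $-{\mathbf \Delta}$ requires separate treatment, which you handle exactly as one should. One slip to fix: the $L^{1}$-in-time component of $Y^{1-s}_{T}$ is $L^{1}([0,T];H^{-(1-s)})=L^{1}([0,T];H^{s-1})$, not $L^{1}([0,T];H^{-s})$; with this correction $\sqrt{-{\mathbf \Delta}}^{\,-1}g(\tau)\in H^{s}$ (rather than $H^{1-s}$) and the homogeneous $s$-Strichartz bound applies directly, so the Minkowski argument goes through for all $0<s<1$.
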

The proof of Corollary~\ref{cor.stri} can essentially be found in \cite{BuTz07}. Notice that here we have to modify slightly the argument to take care of the $0$ eigenvalue of the Laplace operator.
We next state the Sogge estimate which will be involved in our analysis. 
\begin{proposition}\label{chris}
There exists $C>0$ such that for every $n\geq 1$,
$$
\|e_n\|_{L^{4}(M)}\leq C(1+\lambda_{n}^2)^{\frac{1}{8}}\,.
$$
\end{proposition}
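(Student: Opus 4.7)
The statement is the classical Sogge $L^4$ eigenfunction bound in dimension three, at the critical Stein--Tomas exponent. My plan is to reduce it to a spectral cluster estimate and then to a dispersive bound for the kernel obtained from the half-wave propagator.

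First I would introduce a real, even Schwartz function $\chi$ with $\chi(0)=1$ whose Fourier transform $\widehat\chi$ is compactly supported in a small interval $(-\varepsilon,\varepsilon)$, with $\varepsilon$ smaller than the injectivity radius of $(M,g)$. Set $T_\lambda = \chi(\sqrt{-\mathbf{\Delta}}-\lambda)$. Since $e_n$ is an eigenfunction, $T_{\lambda_n} e_n = \chi(0) e_n = e_n$, so it suffices to show the spectral cluster estimate
\begin{equation*}
\|T_\lambda f\|_{L^4(M)} \leq C \lambda^{1/4}\|f\|_{L^2(M)}, \qquad \lambda\geq 1,
\end{equation*}
and then specialize to $f=e_n$, recalling $\|e_n\|_{L^2}=1$ and $\lambda_n \lesssim (1+\lambda_n^2)^{1/2}$.

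The second step is a standard $TT^*$ reduction: the desired bound is equivalent to
\begin{equation*}
\|T_\lambda T_\lambda^*\|_{L^{4/3}(M)\to L^{4}(M)} \leq C\lambda^{1/2},
\end{equation*}
and $T_\lambda T_\lambda^* = \widetilde\chi(\sqrt{-\mathbf{\Delta}}-\lambda)$ where $\widetilde\chi = |\chi|^2$ again has compactly supported Fourier transform. Writing
\begin{equation*}
\widetilde\chi(\sqrt{-\mathbf{\Delta}}-\lambda)= \frac{1}{2\pi}\int_{-\varepsilon}^{\varepsilon} \widehat{\widetilde\chi}(t)\, e^{-it\lambda}\, e^{it\sqrt{-\mathbf{\Delta}}}\,dt,
\end{equation*}
the Schwartz kernel $K_\lambda(x,y)$ is expressed as a short-time integral of the half-wave propagator $e^{it\sqrt{-\mathbf{\Delta}}}$. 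On the scale $|t|<\varepsilon$, this propagator is a Fourier integral operator whose phase is essentially the Riemannian distance $d_g(x,y)$ and whose amplitude is a classical symbol of order $(n-1)/2 = 1$. Stationary phase in $t$ (with critical point at $t=d_g(x,y)$) then yields the pointwise bound
\begin{equation*}
|K_\lambda(x,y)| \leq C\, \lambda\, (1+\lambda\, d_g(x,y))^{-1},
\end{equation*}
plus an $O(\lambda^{-N})$ contribution away from $\{d_g(x,y)\lesssim \varepsilon\}$.

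The third step is to convert this kernel estimate into the $L^{4/3}\to L^4$ bound. The function $x\mapsto \lambda(1+\lambda |x|)^{-1}$ lies in the Lorentz space $L^{3,\infty}(\mathbb{R}^3)$ with norm $O(1)$, which upon rescaling gives the local convolution estimate of operator norm $\lambda^{1-3/p}$ from $L^{p'}$ to $L^p$ at $p=4$, namely $\lambda^{1/2}$. A partition of unity together with local coordinates reduces the intrinsic situation to this convolution on $\mathbb{R}^3$, giving the required bound. The only delicate point is the boundary case $p=4$, which in three dimensions is exactly the Stein--Tomas endpoint; here a direct Young-type estimate still works because of the Lorentz-space refinement, and this is the main technical obstacle. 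With that handled, chaining the three steps gives $\|e_n\|_{L^4} \leq C\lambda_n^{1/4} \leq C(1+\lambda_n^2)^{1/8}$.
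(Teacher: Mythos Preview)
The paper does not supply its own proof of this proposition; it simply quotes the result from Sogge~\cite{So}. Your outline (reduce to a spectral cluster bound, apply $TT^\star$, express $T_\lambda T_\lambda^\star$ through the half-wave group) is indeed the skeleton of Sogge's argument, but the last two steps contain a genuine gap.

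First, the pointwise kernel bound you assert is incorrect. In three dimensions the kernel of $\widetilde\chi(\sqrt{-{\mathbf \Delta}}-\lambda)$ satisfies $|K_\lambda(x,y)|\lesssim \lambda^{2}(1+\lambda\, d_g(x,y))^{-1}$, i.e.\ size $\lambda/d_g$ away from the diagonal and $\lambda^{2}$ (Weyl) on it; in the flat model this is immediate since $K_\lambda(0)=\int\chi(|\xi|-\lambda)\,d\xi\sim\lambda^{2}$. Your bound $\lambda(1+\lambda d_g)^{-1}$ is too small by a full factor of $\lambda$. Second, and this is the heart of the matter, the $L^{4/3}\to L^4$ operator norm $\lambda^{1/2}$ \emph{cannot} be obtained from any pointwise size estimate on $K_\lambda$: convolution with an $L^{3,\infty}(\R^3)$ function maps $L^{p'}\to L^p$ only at $p=6$, not $p=4$, and with the correct kernel size a weak Young/HLS argument misses the target by a power of $\lambda$. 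The exponent $p=4=2(d+1)/(d-1)$ is the Stein--Tomas endpoint, and reaching it requires exploiting the oscillatory factor $e^{\pm i\lambda d_g(x,y)}$ carried by the kernel, via a Carleson--Sj\"olin/H\"ormander oscillatory integral operator theorem (equivalently, in the Euclidean model, the Stein--Tomas restriction theorem for the sphere). That oscillatory-integral step is the actual content of Sogge's proof and is absent from your sketch; your assertion that ``a direct Young-type estimate still works because of the Lorentz-space refinement'' is precisely where the argument fails.
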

Let us note that the $L^{4}(M)$ norm can be replaced by other $L^{p}(M)$, $2\leq p\leq \infty$ 
norms by modifying appropriately the power of $1+\lambda_n^2$ according to an interpolation 
with the trivial $L^2$ bound or the $L^{\infty}$ Weyl bound.
We also note that the estimate of Proposition~\ref{chris} has a natural extension to other dimensions, 
the index $4$ being replaced by $2(d+1)/(d-1)$.
Finally, the estimate for $e_n$ given by  Proposition~\ref{chris} also holds, with the appropriate statement, 
for the spectral projection on $\sqrt{-{\mathbf \Delta}}\in [\lambda,\lambda+1]$.
\section{A large deviation bound}\label{sec.dev}
The purpose of this section is to prove the following statement.
\begin{lemme}\label{lem1}
Let $(l_n(\omega))_{n=1}^{\infty}$ 
be a sequence of real, independent random variables with associated sequence of
distributions $(\mu_{n})_{n=1}^{\infty}$.
Assume that $\mu_{n}$ satisfy the property
\begin{equation}\label{property}
\exists\, c>0\,:\, \forall\,\gamma\in\R,\,\forall\, n\geq 1,\,
\Big|\int_{-\infty}^{\infty}e^{\gamma x}d\mu_{n}(x)\Big|\leq e^{c\gamma^{2}}\,.
\end{equation}
Then there exists $\alpha>0$ such that 
for every $\lambda >0$, every sequence $(c_n)_{n=1}^{\infty}\in l^2$ of real numbers,
\begin{equation}\label{khin}
p\Big(\omega\,:\,\big|\sum_{n=1}^{\infty} c_n l_n(\omega) \big|>\lambda\Big)\leq
2 e^{-\frac{\alpha\lambda^2}{\sum_{n}c_n^2}}
\, .
\end{equation}
As a consequence there exists $C>0$ such that for every $p\geq 2$, every $(c_n)_{n=1}^{\infty}\in l^2$,
\begin{equation}\label{khinbis}
\big\|\sum_{n=1}^{\infty} c_n l_n(\omega) \big\|_{L^{p}(\Omega)}
\leq C\sqrt{p}\big(\sum_{n=1}^{\infty}c_n^2\big)^{1/2}
.
\end{equation}
\end{lemme}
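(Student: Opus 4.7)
The plan is to follow the classical Chernoff exponential-moment argument for sums of sub-Gaussian independent random variables. Hypothesis~\eqref{property} says exactly that each $l_n$ has a uniformly sub-Gaussian moment generating function, so by independence the sum $S = \sum_n c_n l_n$ inherits a sub-Gaussian MGF with variance proxy comparable to $\sum_n c_n^2$, from which~\eqref{khin} drops out upon optimizing a Markov inequality, and~\eqref{khinbis} then follows mechanically by the layer cake formula.

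Concretely, I would first work with a finite partial sum $S_N = \sum_{n=1}^N c_n l_n$. For $\gamma > 0$, Markov gives $p(S_N > \lambda) \leq e^{-\gamma\lambda}\, \mathbb{E}[e^{\gamma S_N}]$, and by independence plus~\eqref{property} applied with parameter $\gamma c_n$ the expectation factors as a product bounded by $\exp\bigl(c\gamma^2 \sum_{n \leq N} c_n^2\bigr) \leq \exp\bigl(c\gamma^2 \sum_n c_n^2\bigr)$. Optimizing in $\gamma$ (taking $\gamma = \lambda/(2c\sum_n c_n^2)$) yields $p(S_N > \lambda) \leq \exp(-\lambda^2/(4c\sum_n c_n^2))$; the same bound applied to $-S_N$ and a union bound give~\eqref{khin} for $S_N$, with $\alpha = 1/(4c)$. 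Since $\sum_n c_n^2 < \infty$ and the $l_n$ are independent with mean zero (implicit in~\eqref{property}, as one sees by Taylor expanding at $\gamma = 0$) and uniformly bounded variance, $(S_N)$ is Cauchy in $L^2(\Omega)$ and converges almost surely to $S$; the tail bound then passes to the limit by Fatou's lemma applied to $\mathbf{1}_{|S_N|>\lambda}$.

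Estimate~\eqref{khinbis} follows by a standard integration of the tail bound. Writing $\sigma^2 := \sum_n c_n^2$, the layer cake formula and~\eqref{khin} give
$$
\mathbb{E}\bigl[|S|^p\bigr]
= p\int_0^\infty \lambda^{p-1}\, p(|S| > \lambda)\,d\lambda
\leq 2p\int_0^\infty \lambda^{p-1} e^{-\alpha\lambda^2/\sigma^2}\,d\lambda
= p\,\sigma^p\,\alpha^{-p/2}\,\Gamma(p/2),
$$
after the substitution $u = \alpha\lambda^2/\sigma^2$. Taking $p$-th roots and invoking the elementary Stirling consequence $\Gamma(p/2)^{1/p} \leq C\sqrt{p}$ yields $\|S\|_{L^p(\Omega)} \leq C\sqrt{p}\,\sigma$, which is~\eqref{khinbis}.

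The argument is essentially textbook Hoeffding/Bernstein and I do not anticipate any substantial obstacle. The only minor points deserving care are that~\eqref{property} comes with absolute value bars around the MGF (harmless, since $\int e^{\gamma x}\,d\mu_n$ is automatically positive for a probability measure) and the bookkeeping involved in passing from the partial sum to the full series, which as indicated above is cleanly handled via a uniform bound in $N$ followed by Fatou.
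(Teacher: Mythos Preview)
Your proposal is correct and follows essentially the same route as the paper: the Chernoff/Markov bound with independence and \eqref{property}, the optimization $\gamma = \lambda/(2c\sum_n c_n^2)$ yielding $\alpha = 1/(4c)$, and then the layer cake formula for \eqref{khinbis}. The paper writes the argument directly for the full series without the partial-sum-and-Fatou step you include, but otherwise the computations match line for line.
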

\begin{remarque}
{\rm
Let us notice that (\ref{property}) is readily satisfied if $(l_n(\omega))_{n=1}^{\infty}$ are standard 
real Gaussian or standard Bernoulli variables. Indeed in the case of Gaussian
$$
\int_{-\infty}^{\infty}e^{\gamma x}d\mu_{n}(x)=
\int_{-\infty}^{\infty}e^{\gamma x}\, e^{-x^2/2}\frac{dx}{\sqrt{2\pi}}
=e^{\gamma^2/2}\,.
$$
In the case of  Bernoulli variables (or more generally any random variables having compactly supported distribution) one can obtain that (\ref{property}) 
is satisfied by invoking the inequality
$$
\frac{e^{\gamma }+e^{-\gamma }}{2}\leq e^{\gamma^2/2},\quad \forall\, \gamma\in\R.
$$
}
\end{remarque}
\begin{proof}[Proof of Lemma~\ref{lem1}.]
We give an argument similar to the proof of \cite[Lemma~4.2]{Tz3}.
In the case of Gaussian we can see Lemma~\ref{lem1} as a very particular case of a $L^p$ smoothing 
properties of the Hartree-Foch heat flow (see e.g. \cite[Section~3]{Tz3} for more details on this issue).
For $t>0$ to be determined later, using the independence and (\ref{property}), we obtain 
\begin{multline*}
\int_{\Omega}\, e^{t\sum_{n\geq 1}c_n l_n(\omega)}dp(\omega)
= \prod_{n\geq 1}\int_{\Omega}e^{t c_n l_n(\omega)}dp(\omega)
\\
=  \prod_{n\geq 1}\int_{-\infty}^{\infty}e^{tc_n x}\, d\mu_{n}(x)
\leq 
\prod_{n\geq 1}e^{c(t c_n)^2}= e^{(ct^2)\sum_{n}c_n^2}\, .
\end{multline*}
Therefore
$$
e^{(ct^2)\sum_{n}c_n^2}\geq e^{t\lambda}\,\,\,  p\,(\omega\,:\,\sum_{n\geq 1} c_n l_n(\omega)>\lambda)
$$
or equivalently,
$$
p\,(\omega\,:\,\sum_{n\geq 1} c_n l_n(\omega)>\lambda)\leq e^{(ct^2)\sum_{n}c_n^2}\,\,\,
e^{-t\lambda}\, .
$$
We choose $t$ as
$$
t\equiv \frac{\lambda}{2c \sum_{n}c_n^2}\,.
$$
Hence
$$
p\,(\omega\,:\,\sum_{n\geq 1} c_n l_n(\omega)>\lambda)\leq
e^{-\frac{\lambda^2}{4c\sum_{n}c_n^2}}\, .
$$
In the same way (replacing $c_n$ by $-c_n$), we can show that
$$
p\,(\omega\,:\,\sum_{n\geq 1} c_n l_n(\omega)<-\lambda)\leq
e^{-\frac{\lambda^2}{4c\sum_{n}c_n^2}}
$$
which completes the proof of~\eqref{khin}. To deduce~\eqref{khinbis}, we  write
\begin{multline*}
\|\sum_{n=1}^{\infty} c_n l_n(\omega)\|^p_{L^p(\Omega)}= 
{p} \int_0^{+\infty} p(\omega\,:\, |\sum_{n=1}^{\infty} c_n l_n(\omega)|>\lambda ) \lambda^{p-1} d \lambda\\
\leq Cp \int_0^{+\infty} \lambda^{p-1} e^{-\frac{c \lambda^2}{\sum_n c_n^2}} d\lambda \leq Cp  (C\sum_n c_n^2)^{\frac p 2}\int_0^{+\infty} \lambda^{p-1} e^{-\frac{\lambda^2} 2} d\lambda \leq C  (Cp \sum_n c_n^2)^{\frac p 2}
\end{multline*}
which completes the proof of Lemma~\ref{lem1}.
\end{proof}
\section{Averaging effects}\label{sec.ave}
In this section, we exploit the randomization to get two $L^4$ estimates for the free evolution.
These estimates play a central role in the proof of Theorem~\ref{thm1}.
\begin{proposition}\label{prop.aver}
Let $s\geq 1/ 4$ and $0<T\leq 1$.
Under the assumptions of Theorem~\ref{thm1}, 
for $f=(f_1,f_2)\in {\mathcal H}^{s}(M)$, we consider the free evolution with data $f^{\omega}$, given by 
$$
u_{f}^\omega(t,x) = 
\cos(t\sqrt{- {\mathbf \Delta}}) f_1^\omega+\frac{\sin ( t \sqrt{- {\mathbf \Delta}})}{ \sqrt{ - {\mathbf \Delta}}}f_2^\omega\,.
$$
Then there exists $C>0$ such that for every $f\in {\mathcal H}^{s}(M)$,
\begin{equation}\label{aver}
\|(-{\mathbf \Delta} +1)^{\frac s 2 - \frac 1 8}u_f^\omega\|_{L^4(\Omega_\omega\times [0,T]_t\times M_x)} 
\leq C T^{1/4}\|f\|_{{\mathcal H}^{s}(M)}\,.
\end{equation}
In particular, thanks to the Bienaym\'e-Tchebichev inequality, if we set
$$
E_{\lambda,T,f}= \Big\{ \omega \in \Omega\,:\,
\|(-{\mathbf \Delta} +1)^{\frac s 2 - \frac 1 8}u_f^\omega\|_{L^4([0,T]_t\times M_x)} \geq \lambda\Big\}
$$
then there exists $C>0$ such that for every $\lambda>0$, every $f\in {\mathcal H}^{s}(M)$,
$$ 
p(E_{\lambda,T,f})\leq CT \lambda ^{-4}\,
\|f\|^4_{{\mathcal H}^{s}(M)}\,.
$$
\end{proposition}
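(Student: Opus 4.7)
The plan is to compute the left-hand side by writing out the randomized free evolution spectrally and then exchanging the order of integration. Write
$$
(-\mathbf{\Delta}+1)^{s/2-1/8}u_f^\omega(t,x) = \sum_{n\geq 1}\bigl[h_n(\omega)\,\phi_n(t,x) + l_n(\omega)\,\psi_n(t,x)\bigr],
$$
where $\phi_n(t,x) = (1+\lambda_n^2)^{s/2-1/8}\cos(t\lambda_n)\,\alpha_n\, e_n(x)$ and $\psi_n(t,x) = (1+\lambda_n^2)^{s/2-1/8}\frac{\sin(t\lambda_n)}{\lambda_n}\,\beta_n\, e_n(x)$ (with the obvious interpretation $\sin(t\lambda_n)/\lambda_n = t$ if $\lambda_n=0$). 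By Fubini, the $L^4(\Omega\times[0,T]\times M)$ norm equals $\|\,\|\cdot\|_{L^4_\omega}\,\|_{L^4_{t,x}}$, so the first step is to bound $\|(-\mathbf{\Delta}+1)^{s/2-1/8}u_f^\omega(t,x)\|_{L^4_\omega}$ for fixed $(t,x)$.

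At this point Lemma~\ref{lem1} is not directly available since the hypothesis~\eqref{norm} gives only fourth moments, not exponential ones. However, for the single exponent $p=4$ one obtains the needed Khintchine-type inequality by elementary means: expanding $\mathbb{E}[(\sum_n a_n l_n)^4]$, only diagonal and paired terms survive due to independence and zero mean, and each is controlled by $\mathbb{E}[l_n^4]\leq C$ (and $\mathbb{E}[l_n^2]\leq C^{1/2}$ by Jensen), yielding $\|\sum_n a_n l_n\|_{L^4(\Omega)}\leq C(\sum_n a_n^2)^{1/2}$. Applied to our sum (using that $h_n,l_n$ are independent across $n$ and between the two families), this produces
$$
\|(-\mathbf{\Delta}+1)^{s/2-1/8}u_f^\omega(t,x)\|_{L^4_\omega}^2 \leq C\sum_{n\geq 1}\bigl(|\phi_n(t,x)|^2 + |\psi_n(t,x)|^2\bigr).
$$

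Integrating in $(t,x)$ and taking Minkowski's inequality in $L^2_{t,x}$ (to move the $\ell^1_n$ of squares outside the $L^2_{t,x}$ norm), we get
$$
\|(-\mathbf{\Delta}+1)^{s/2-1/8}u_f^\omega\|_{L^4(\Omega\times[0,T]\times M)}^2 \leq C\sum_{n\geq 1}\bigl(\|\phi_n\|_{L^4_{t,x}}^2 + \|\psi_n\|_{L^4_{t,x}}^2\bigr).
$$
Now the factorization $\|\phi_n\|_{L^4_{t,x}} = (1+\lambda_n^2)^{s/2-1/8}|\alpha_n|\,\|\cos(t\lambda_n)\|_{L^4_t([0,T])}\,\|e_n\|_{L^4(M)}$ combined with $\|\cos(t\lambda_n)\|_{L^4_t([0,T])}\leq T^{1/4}$ and Sogge's bound $\|e_n\|_{L^4(M)}\leq C(1+\lambda_n^2)^{1/8}$ (Proposition~\ref{chris}) gives $\|\phi_n\|_{L^4_{t,x}}^2 \leq C T^{1/2}(1+\lambda_n^2)^s|\alpha_n|^2$; an identical computation, using $|\sin(t\lambda_n)/\lambda_n|^4$ integrates to at most $C T(1+\lambda_n^2)^{-2}$ on $[0,T]$ (treating the finitely many small $\lambda_n$ separately and using $T\leq 1$), produces $\|\psi_n\|_{L^4_{t,x}}^2\leq C T^{1/2}(1+\lambda_n^2)^{s-1}|\beta_n|^2$. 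Summing recognizes the right-hand side as $C T^{1/2}\|f\|_{\mathcal{H}^s(M)}^2$, which is the desired estimate~\eqref{aver}. The consequence for $p(E_{\lambda,T,f})$ is then immediate from Chebyshev's inequality, raised to the fourth power, applied to the $L^4(\Omega)$ norm.

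The only delicate point is balancing exponents: the derivative loss $s/2-1/8$ is chosen exactly to combine with the $(1+\lambda_n^2)^{1/8}$ supplied by Sogge so as to yield one factor of $(1+\lambda_n^2)^{s/2}$, which is the spectral weight of $H^s(M)$. The rest is a clean application of Fubini, elementary $L^4$ Khintchine, Minkowski in $L^2_{t,x}$, and Sogge's eigenfunction bound.
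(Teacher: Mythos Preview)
Your proof is correct and follows essentially the same route as the paper's. The paper first reduces to $e^{\pm it\sqrt{-\mathbf{\Delta}}}f_j^\omega$ rather than handling $\cos$ and $\sin$ separately, and it packages your elementary $L^4$-Khintchine argument as a standalone lemma (for general even exponents $2k$), but the core steps---Khintchine in $\omega$ via the fourth-moment hypothesis, Minkowski in $L^2_{t,x}$, and Sogge's $L^4$ eigenfunction bound to absorb the $(1+\lambda_n^2)^{1/8}$---are identical.
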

\begin{proof}
By expanding the sines and cosines functions as sums of exponentials, we obtain that
we may only consider the contribution of  
$(-{\mathbf \Delta} +1)^{\frac s 2 - \frac 1 8}e^{it\sqrt{ - {\mathbf \Delta}}} f_1^\omega$, 
the other contributions being dealt with similarly (again the zero frequency
should be treated separately).
Suppose that
\begin{equation*}
f_1(x)= \sum_n \alpha _n e_n(x)\in H^{s}(M).
\end{equation*}
If we set
$\widetilde \alpha_n = (\lambda_n^2 +1) ^{\frac s 2 - \frac 1 8} \alpha_n$ 
then 
$$ \|f_1\|^2_{H^s(M)}=\sum_{n} 
|\widetilde \alpha_n| ^2 (1+\lambda_n^2)^{\frac 1 4}\,.
$$
We shall use the following result.
\begin{lemme}\label{lem.k} 
Assume that $(h_n)_{n=1}^{\infty}$ is a sequence of independent, $0$-mean value, complex random variables  satisfying for some 
$k\in \mathbb{N}^*$
$$ 
\exists\, C>0,\, 
\forall\, n \geq 1,\, \int _\Omega  |h_n(\omega)|^{2k} dp (\omega) \leq C
$$
then 
\begin{equation} \label{eq.lp}
\begin{gathered}
\forall\, 2\leq p \leq 2k,
\quad \exists\, C>0,\quad \forall\, (c_n)_{n \in \mathbb{N}^*}\in l^2(\mathbb{N}^*, \mathbb{C}),\\
 \|\sum_n c_nh_n\|_{L^p (\Omega)} \leq C(\sum_n |c_n|^2)^{1/2}\,.
\end{gathered}
\end{equation}
\end{lemme}
\begin{proof} Using H\"older inequality, it suffices to prove the estimate for $p=2k$.
We have
\begin{multline*}
\int_\Omega |\sum_n c_n h_n(\omega)|^{2k}
=\sum_{n_1,\cdots, n_{2k}} \int_\Omega c_{n_1}\times\cdots\times c_{n_k}\overline {c_{n_{k+1}}\times\cdots\times c_{n_{2k}}} \\
 h_{n_1}(\omega)\times\cdots\times h_{n_k}(\omega)\overline {h_{n_{k+1}}(\omega)\times\cdots\times h_{n_{2k}}(\omega)} dp (\omega).
\end{multline*}
Using the independence and the fact that the random variables have $0$ mean, we obtain that for the contribution of $n_1, \cdots, n_{2k}$ not to vanish, each index have to be appear at least twice. As a consequence (using that the $2k$-th moment of the random variables are uniformly bounded)
\begin{equation*}\begin{aligned}
\int_\Omega |\sum_n c_n h_n(\omega)|^{2k}&\leq \sum_{n_1, \cdots, n_k}\int_\Omega |c_{n_1}  \cdots c_{n_k}|^2|h_{n_1}(\omega)\cdots h_{n_{k}}( \omega)|^2 dp (\omega)\\
& \leq C   (\sum_{n} |c_{n} |^2)^k.
\end{aligned}
\end{equation*}
\end{proof}
Let us come back to the proof of Proposition~\ref{prop.aver}. Using Lemma~\ref{lem.k}, we obtain
\begin{equation*}
\begin{aligned}
\|(-{\mathbf \Delta} +1)^{\frac s 2- \frac 1 8} 
e^{it \sqrt{ - {\mathbf \Delta}}} f_1^\omega\|_{L^4(\Omega_\omega\times [0,T]_t\times M_x)}&\leq C \Bigl\|
\Bigl(\sum_{n}|\widetilde \alpha _{n}
e_n(x)|^2\Bigr)^{1/2}\Bigr\|_{L^4((0,T)\times M)}
\\
&= C \Bigl\|
\sum_{n}|\widetilde \alpha _{n} e_n(x)|^2\Bigr\|_{L^2((0,T)\times M)}^{1/2}\\
&\leq  C
\Bigl(\sum_{n} \Bigl\||\widetilde \alpha _{n} e_n(x)|^2 \Bigr\|_{L^2((0,T)\times M)}\Bigr)^{1/2}\\
&\leq  CT^{1/4}\Bigl(
\sum_{n} \|\widetilde \alpha _{n} e_n(x)\|^2_{L^4(M)}\Bigr)^{1/2}
\end{aligned}
\end{equation*}
which, according to Proposition~\ref{chris} implies Proposition~\ref{prop.aver}.
\end{proof}
\begin{remarque} 
{\rm
For $1<p<+\infty$, the norm  $ \|f\|_{W^{s,p}(M)}$ and 
$\|(- {\mathbf \Delta} +1)^{\frac s 2}\|_{L^p(M)}$ are equivalent. Indeed, for 
$s\in 2 \mathbb{N}$, this is a consequence of the $L^p$ elliptic regularity
theorem and for general $s$ it follows by interpolation and duality.
}
\end{remarque}
As a consequence of Lemma~\ref{lem1}, under assumption (\ref{property}), 
we can improve the averaging effect estimate as follows.
\begin{proposition}\label{prop.aver.pak}
Under the assumption of Proposition~\ref{prop.aver}, if we suppose that in addition the randomization obeys
the condition (\ref{property}), then for $p\geq 4$,
\begin{equation}\label{chaos}
\|(-{\mathbf \Delta} +1)^{\frac s 2 - \frac 1 8}u_f^\omega\|_{L^p(\Omega;L^4( [0,1]_t\times M_x))} 
\leq C p^{\frac{1}{2}}\|f\|_{{\mathcal H}^{s}(M)}\,.
\end{equation}
As a consequence, if we set
$$
E_{\lambda,f}= \Big\{ \omega \in \Omega\,:\,
\|(-{\mathbf \Delta} +1)^{\frac s 2 - \frac 1 8}u_f^\omega\|_{L^4([0,1]_t\times M_x)} \geq \lambda\Big\}
$$
then there exist $C>0$ and $c>0$ such that for every $\lambda>0$, every $f\in {\mathcal H}^{s}(M)$,
\begin{equation}\label{iyud} 
p(E_{\lambda,f})\leq C\,e^{-c\lambda ^{2}/\|f\|^2_{{\mathcal H}^{s}(M)}}\,\,.
\end{equation}
\end{proposition}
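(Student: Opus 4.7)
The plan is to redo the proof of Proposition~\ref{prop.aver} but, instead of using Lemma~\ref{lem.k}, invoke the stronger $L^p$ Khintchine-type estimate~\eqref{khinbis} from Lemma~\ref{lem1}, which under assumption~\eqref{property} gives the sharp $\sqrt{p}$ growth in the $L^p(\Omega)$ norm of any random series $\sum c_n l_n(\omega)$.

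First I would expand the sines and cosines as exponentials, so that it is enough to estimate the contribution of, say, $v(t,x,\omega)=(-{\mathbf \Delta}+1)^{\frac{s}{2}-\frac{1}{8}}e^{it\sqrt{-{\mathbf \Delta}}}f_1^\omega=\sum_n h_n(\omega)\widetilde\alpha_n e^{it\lambda_n}e_n(x)$ (the zero mode being handled trivially), with $\widetilde\alpha_n=(1+\lambda_n^2)^{\frac{s}{2}-\frac{1}{8}}\alpha_n$ satisfying $\sum_n|\widetilde\alpha_n|^2(1+\lambda_n^2)^{1/4}=\|f_1\|_{H^s}^2$. For each fixed $(t,x)\in[0,1]\times M$, the quantity $v(t,x,\omega)$ is a random series of the form $\sum_n c_n(t,x)h_n(\omega)$ with $c_n(t,x)=\widetilde\alpha_n e^{it\lambda_n}e_n(x)$; here one should decompose into real and imaginary parts if needed so as to apply~\eqref{khinbis} to real random variables. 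Hypothesis~\eqref{property} together with Lemma~\ref{lem1} then yields, for every $p\ge 2$,
\begin{equation*}
\|v(t,x,\cdot)\|_{L^p(\Omega)}\le C\sqrt{p}\Bigl(\sum_n|\widetilde\alpha_n|^2|e_n(x)|^2\Bigr)^{1/2}.
\end{equation*}

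Next, since $p\ge 4$, Minkowski's inequality allows me to exchange the order of integration:
\begin{equation*}
\|v\|_{L^p(\Omega;L^4([0,1]\times M))}\le \|v\|_{L^4([0,1]\times M;L^p(\Omega))}\le C\sqrt{p}\,\Bigl\|\Bigl(\sum_n|\widetilde\alpha_n e_n|^2\Bigr)^{1/2}\Bigr\|_{L^4([0,1]\times M)}.
\end{equation*}
The remaining deterministic quantity is exactly the one that was estimated at the end of the proof of Proposition~\ref{prop.aver}: writing the square-function as $\|\sum_n|\widetilde\alpha_n e_n|^2\|_{L^2}^{1/2}$, using the triangle inequality in $L^2$ and finally Sogge's bound from Proposition~\ref{chris}, one controls it by $C\bigl(\sum_n|\widetilde\alpha_n|^2(1+\lambda_n^2)^{1/4}\bigr)^{1/2}=C\|f_1\|_{H^s}$ (and analogously the $f_2$ contribution is controlled by $\|f_2\|_{H^{s-1}}$). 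This proves~\eqref{chaos}.

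For the tail bound~\eqref{iyud}, I use the standard Chebyshev-plus-optimization trick: by Markov's inequality
\begin{equation*}
p(E_{\lambda,f})\le\lambda^{-p}\|(-{\mathbf \Delta}+1)^{\frac{s}{2}-\frac{1}{8}}u_f^\omega\|_{L^p(\Omega;L^4([0,1]\times M))}^p\le\Bigl(\frac{C\sqrt{p}\,\|f\|_{\mathcal{H}^s}}{\lambda}\Bigr)^p,
\end{equation*}
valid for every $p\ge 4$. Choosing $p=\lambda^2/(eC^2\|f\|_{\mathcal{H}^s}^2)$ when this quantity exceeds $4$ gives the exponential decay $e^{-c\lambda^2/\|f\|_{\mathcal{H}^s}^2}$, while for small $\lambda$ the estimate~\eqref{iyud} is trivial after enlarging $C$. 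No real obstacle is expected; the only mildly delicate point is making the random-series argument fit the framework of Lemma~\ref{lem1}, which is real-valued, by splitting $e^{it\lambda_n}$ into its real and imaginary parts and applying~\eqref{khinbis} to each piece separately.
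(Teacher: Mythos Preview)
Your proposal is correct and follows essentially the same route as the paper: reduce to the exponential piece, use Minkowski (for $p\ge 4$) to swap $L^p(\Omega)$ and $L^4_{t,x}$, apply the Khintchine-type bound~\eqref{khinbis} from Lemma~\ref{lem1} to extract the $\sqrt{p}$, and finish with the square-function computation via Proposition~\ref{chris}; the tail bound~\eqref{iyud} is then obtained exactly by Chebyshev plus optimization in $p$. The only cosmetic difference is the order in which you state Minkowski and the pointwise Khintchine step, and your explicit remark about splitting $e^{it\lambda_n}$ into real and imaginary parts, which the paper leaves implicit.
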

\begin{proof}
As in the proof of Proposition~\ref{prop.aver}, we may only consider the contribution of  
$(-{\mathbf \Delta} +1)^{\frac s 2 - \frac 1 8}e^{it\sqrt{ - {\mathbf\Delta}}} f_1^\omega$. Writing
$
f_1= \sum_n \alpha _n e_n
$
and if we set
$\widetilde \alpha_n = (\lambda_n^2 +1) ^{\frac s 2 - \frac 1 8} \alpha_n$ 
then 
$$ \|f_1\|^2_{H^s(M)}=\sum_{n} 
|\widetilde \alpha_n| ^2 (1+\lambda_n^2)^{\frac 1 4}\,.
$$
Set
$$
v_{f_1}^\omega(t,x)\equiv(-{\mathbf \Delta} +1)^{\frac s 2 - \frac 1 8}e^{it\sqrt{ - {\mathbf \Delta}}} f_1^\omega\,.
$$
Thus the issue is to show that
\begin{equation*}
\| v_{f_1}^\omega(t,x)\|_{L^{p}(\Omega;L^4( [0,1]_t\times M_x))} 
\leq C \sqrt{p}\|f_1\|_{H^s(M)}\,.
\end{equation*}
By the Minkowski inequality, for $p\geq 4$,
\begin{equation*}
\| v_{f_1}^\omega(t,x)\|_{L^{p}(\Omega;L^4( [0,1]_t\times M_x))} 
\leq \| v_{f_1}^\omega(t,x)\|_{L^4( [0,1]_t\times M_x;L^{p}(\Omega))}\,. 
\end{equation*}
Thanks to Lemma~\ref{lem1},
$$
\|v_{f_1}^\omega(t,x)\|_{L^{p}(\Omega)}=
\|\sum_n\widetilde{\alpha}_ne^{it\lambda_n} e_n(x) h_n(\omega)\|_{L^{p}(\Omega)}\leq
C\sqrt{p}\Bigl(\sum_n |\widetilde{\alpha_n}e_n(x)|^2\Bigr)^{1/2}.
$$
Therefore, we get, thanks to Proposition~\ref{chris},
\begin{eqnarray*}
\| v_{f_1}^\omega(t,x)\|_{L^{p}(\Omega;L^4( [0,1]_t\times M_x))} 
& \leq &
C\sqrt{p}\Bigl\|\Bigl(\sum_n |\widetilde{\alpha_n}e_n(x)|^2\Bigr)^{1/2}\Bigr\|_{L^{4}([0,1]_t\times M_x)}
\\
& \leq &
C\sqrt{p}\Bigl(\Bigl\|\sum_n |\widetilde{\alpha_n}e_n(x)|^2\Bigr\|_{L^{2}(M_x)}\Bigr)^{1/2}
\\
& \leq &C\sqrt{p}\Bigl(\sum_n\Bigl\| |\widetilde{\alpha_n}e_n(x)|^2\Bigr\|_{L^{2}(M_x)}\Bigr)^{1/2}\\
&\leq &C\sqrt{p}\Bigl(\sum_n|\widetilde{\alpha_n}|^2\Big\| e_n(x)\Bigr\|^2_{L^{4}(M_x)}\Bigr)^{1/2}\\
&\leq &C\sqrt{p}\Bigl(\sum_n|\widetilde{\alpha_n}|^2(1+ \lambda_n^2)^{1/4}\Bigr)^{1/2}
\end{eqnarray*}
which completes the proof of (\ref{chaos}).
Let us now turn to the proof of (\ref{iyud}).
Thanks to the Bienaym\'e-Tchebichev inequality, there exists $\alpha>0$ such that for every $p\geq 4$, every
$f\in {\mathcal H}^{s}(M)$, 
\begin{equation}\label{optim}
p(E_{\lambda,f})\leq \lambda^{-p}\Big(\alpha\sqrt{p}\|f\|_{{\mathcal H}^s(M)}\Big)^{p}\,.
\end{equation}
Inequality (\ref{iyud}) easily holds, if $\lambda$ is such that
\begin{equation}\label{4}
\frac{\lambda }{\|f\|_{{\mathcal H}^s(M)}}\leq 2\alpha e\,.
\end{equation}
If (\ref{4}) does not hold, we set
$$
p\equiv\Big[\frac{\lambda }{\alpha\|f\|_{{\mathcal H}^s(M)}e}\Big]^{2}\,\, (\geq 4).
$$
With this choice of $p$, we come back to (\ref{optim}) which yields (\ref{iyud}).
This completes the proof of Proposition~\ref{prop.aver.pak}. 
\end{proof}
\section{The fixed point}\label{sec.fix} 
If we wish to solve 
\begin{equation*}
(\partial_{t}^{2}-{\mathbf \Delta})u+u^3=0,\quad (u,\partial_t u)|_{t=0}=(f^{\omega}_1,f^{\omega}_2)=f^{\omega}
\end{equation*}
by writing $u = u_{f}^{\omega}+ v$, where $u_{f}^{\omega}$ denotes the free evolution associated to 
$f^{\omega}$, we obtain that $v$ solves
\begin{equation}\label{2}
(\partial_{t}^{2}-{\mathbf \Delta})v=-(u_f^{\omega}+v)^3, \quad (v,\partial_t v)|_{t=0}=(0,0).
\end{equation}
Write this equation as 
$$ v(t, \cdot) = -\int_0^t 
\frac{\sin((t-\tau) \sqrt{ - {\mathbf \Delta}})} { \sqrt{ - {\mathbf \Delta}}}((u_f^\omega+v)^3) (\tau, \cdot) d\tau.
$$
Define the map
$$
K_{f}^{\omega} \,:\,v
\longmapsto 
-\int_0^t 
\frac{\sin((t-\tau) \sqrt{ - {\mathbf \Delta}})} { \sqrt{ - {\mathbf \Delta}}}((u_f^\omega+v)^3) (\tau, \cdot) d\tau.
$$

\subsection{The case $s=1/4$}
In this case,  the numerology is particularly simple
\begin{proposition}\label{prop.fixedbis}
Let us fix $s=1/4$. 
Then there exists $C>0$  such that for every $T\in[0,1]$, every 
$f\in \mathcal{H}^{1/4}(M)$, every $\lambda>0$ for $\omega\in E^{c}_{\lambda,f}$ the map  $K_{f}^{\omega}$ satisfies
\begin{equation}\label{eq.fixedter}
\|K_{f}^{\omega}(v)\|_{X^{1/2}_T} 
\leq C \Big(\lambda^{3}+  \|v\|_{X^{1/2}_T}^{3}\Big),
\end{equation}
\begin{equation}\label{eq.fixedquar}
\|K_{f}^{\omega}(v)-K_{f}^{\omega}(w)\|_{X^{1/2}_T} 
\leq C 
\|v-w\|_{X^{1/2}_T}
\Big(\lambda^{2}+ \|v\|_{X^{1/2}_T}^{2}+\|w\|_{X^{1/2}_T}^{2}\Big)\,.
\end{equation} 
\end{proposition}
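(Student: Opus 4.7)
The plan is to apply the inhomogeneous Strichartz estimate~\eqref{eq.inhomog} at the critical level $s=1/2$ and then absorb the nonlinearity into a single $L^{4/3}_{t,x}$ bound. For $s=1/2$, the admissibility relation $1/p+3/q=1$ with $p\geq 4$ has the two extremal couples $(4,4)$ and $(\infty,3)$, so $X_T^{1/2}$ continuously contains $L^4((0,T)\times M)$ while $Y_T^{1/2}$ is continuously contained in $L^{4/3}((0,T)\times M)$. Corollary~\ref{cor.stri} therefore gives
$$
\|K_f^\omega(v)\|_{X_T^{1/2}}\leq C\|(u_f^\omega+v)^3\|_{L^{4/3}((0,T)\times M)}
= C\|u_f^\omega+v\|_{L^4((0,T)\times M)}^3.
$$

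Next, I would exploit the tailored numerology $s=1/4$, for which the weight $(-\mathbf{\Delta}+1)^{s/2-1/8}$ appearing in Proposition~\ref{prop.aver} is trivial. Consequently, on the event $E_{\lambda,f}^c$ we get directly $\|u_f^\omega\|_{L^4([0,T]\times M)}\leq \lambda$. Combined with the embedding $X_T^{1/2}\hookrightarrow L^4_{t,x}$ and the triangle inequality, this yields $\|u_f^\omega+v\|_{L^4_{t,x}}\leq \lambda+C\|v\|_{X_T^{1/2}}$, and~\eqref{eq.fixedter} then follows from $(a+b)^3\leq C(a^3+b^3)$.

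For the Lipschitz bound~\eqref{eq.fixedquar}, I would use the algebraic factorization
$$
(u_f^\omega+v)^3-(u_f^\omega+w)^3=(v-w)\bigl[(u_f^\omega+v)^2+(u_f^\omega+v)(u_f^\omega+w)+(u_f^\omega+w)^2\bigr],
$$
then apply the Hölder product $L^4_{t,x}\cdot L^2_{t,x}\hookrightarrow L^{4/3}_{t,x}$: the factor $v-w$ is estimated in $L^4_{t,x}$ by $\|v-w\|_{X_T^{1/2}}$, while the bracket is estimated in $L^2_{t,x}$ by the square of $\|u_f^\omega+v\|_{L^4_{t,x}}+\|u_f^\omega+w\|_{L^4_{t,x}}$, itself controlled by $\lambda+\|v\|_{X_T^{1/2}}+\|w\|_{X_T^{1/2}}$ thanks to the bound used above.

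There is essentially no substantive obstacle: the only point to verify is the numerological alignment, namely that the choice $s=1/4$ makes Proposition~\ref{prop.aver} deliver exactly the $L^4_{t,x}$ norm of the free evolution (with no derivative loss), which in turn is precisely the Strichartz space at the critical threshold $s=1/2$. This is the entire content of the gain of $1/4$ derivatives afforded by randomization, and it is what allows the cubic nonlinearity to fit directly into the dual Strichartz norm.
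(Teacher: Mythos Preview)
Your proof is correct and follows exactly the paper's approach: apply Corollary~\ref{cor.stri} at level $s=1/2$ using the admissible couple $(4,4)$ so that the nonlinearity lands in $L^{4/3}_{t,x}$, observe that for $s=1/4$ the weight in the definition of $E_{\lambda,f}$ is the identity, and conclude by H\"older. Your write-up is in fact more explicit than the paper's (you spell out the factorization and the $L^4\cdot L^2\hookrightarrow L^{4/3}$ H\"older split for the Lipschitz bound, whereas the paper states the final inequality directly), but the argument is the same.
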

\begin{proof}
Indeed, for $\omega \in E^c_{\lambda, f}$, we have 
$$ \|u_f^\omega\|_{L^4((0,T)\times M)}\leq \lambda
$$
and consequently, according to Corollary~\ref{cor.stri},
$$ 
\|K_{f}^{\omega}(v)\|_{X^{1/2}_T} \leq C\|(u_f^\omega+v)^3\|_{L^{4/3}([0,T]\times M)}\leq C \Bigl(\lambda^3 +\|v\|^3_{X^{1/2}_T}\Bigr)
$$ 
and
\begin{multline*}
\|K_{f}^{\omega}(v)- K_f^{\omega}(w)\|_{X^{1/2}_T} \\
\leq C\|(u_f^\omega+v)^3 -(u_f^\omega+w)^3 \|_{L^{4/3}([0,T]\times M)}\leq C\|v-w\|_{X^{1/2}_T} (\lambda^2 +\|v\|^2_{X^{1/2}_T}+\|v\|^2_{X^{1/2}_T})
\end{multline*}
\end{proof}
\subsection{ The case $s>1/4$}
\begin{proposition}\label{prop.fixed}
Let us fix $s>1/4$. 
Then there exists $\sigma>1/2$, $C>0$ and $\kappa>0$ such that for every $T\in[0,1]$, every 
$f\in H^s(M)$, every $\lambda>0$, $\omega\in E^{c}_{\lambda,f}$ the map  $K_{f}^{\omega}$ satisfies
\begin{equation}\label{eq.fixed}
\|K_{f}^{\omega}(v)\|_{X^\sigma_T} 
\leq C \Big(\lambda^{3}+ T^\kappa \|v\|_{X^\sigma_T}^{3}\Big),
\end{equation}
\begin{equation}\label{eq.fixedbis}
\|K_{f}^{\omega}(v)-K_{f}^{\omega}(w)\|_{X^\sigma_T} 
\leq C T^\kappa
\|v-w\|_{X^\sigma_T}
\Big(\lambda^{2}+ \|v\|_{X^\sigma_T}^{2}+\|w\|_{X^\sigma_T}^{2}\Big)\,.
\end{equation} 
\end{proposition}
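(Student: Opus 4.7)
The plan is to take $\sigma = 1/2 + \varepsilon$ with $\varepsilon > 0$ small (to be chosen depending on $s - 1/4 > 0$) and $\kappa = \varepsilon/2$. The inhomogeneous Strichartz estimate of Corollary~\ref{cor.stri} reduces $\|K_f^\omega(v)\|_{X_T^\sigma}$ to controlling $\|(u_f^\omega+v)^3\|_{Y_T^{1-\sigma}}$. Since the pair $(2/(1-\sigma),2/\sigma)=(4/(1-2\varepsilon),4/(1+2\varepsilon))$ is $(1-\sigma)$-admissible, it is enough to bound the cubic nonlinearity in the concrete Lebesgue component $L^{4/(3+2\varepsilon)}_T L^{4/(3-2\varepsilon)}_x \subset Y_T^{1-\sigma}$. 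I then expand $(u_f^\omega+v)^3 = u_f^{\omega,3}+3u_f^{\omega,2}v+3u_f^\omega v^2 + v^3$ and estimate the four pieces separately.

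The first piece $u_f^{\omega,3}$ is where the hypothesis $s > 1/4$ is fully used. On $E_{\lambda,f}^c$, Proposition~\ref{prop.aver} controls $(-{\mathbf \Delta}+1)^{s/2-1/8}u_f^\omega$ in $L^4([0,T]\times M)$, and the Sobolev embedding $W^{s-1/4,4}(M)\hookrightarrow L^{3/(1-s)}(M)$, available precisely because $s-1/4>0$, yields $\|u_f^\omega\|_{L^4_T L^{3/(1-s)}_x}\le C\lambda$. Cubing and using the compact-manifold embedding $L^{1/(1-s)}\hookrightarrow L^{4/(3-2\varepsilon)}$, valid exactly when $\varepsilon\le 2s-1/2$, together with the trivial time-H\"older $L^{4/3}_T\hookrightarrow L^{4/(3+2\varepsilon)}_T$ (constant $T^{\varepsilon/2}\le 1$), yields $\|u_f^{\omega,3}\|_{Y_T^{1-\sigma}}\le C\lambda^3$. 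For the three pieces involving $v$, I exploit the fact that $(4,12/(3-4\varepsilon))$ is $\sigma$-admissible, so by Strichartz $\|v\|_{L^4_T L^{12/(3-4\varepsilon)}_x}\le C\|v\|_{X_T^\sigma}$. H\"older in space (combining the $L^{3/(1-s)}$ bound on $u_f^\omega$ and the $L^{12/(3-4\varepsilon)}$ bound on $v$, possibly after a harmless compact-manifold embedding) and in time (three factors of $L^4_T$ combine into $L^{4/3}_T$) bound $u_f^{\omega,2}v$, $u_f^\omega v^2$, $v^3$ in $L^{4/3}_T L^{4/(3-2\varepsilon)}_x$ by $\lambda^2\|v\|_{X_T^\sigma}$, $\lambda\|v\|_{X_T^\sigma}^2$, $\|v\|_{X_T^\sigma}^3$ respectively. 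The final passage $L^{4/3}_T\hookrightarrow L^{4/(3+2\varepsilon)}_T$ produces the key factor $T^{\varepsilon/2}$, and Young's inequality $\lambda^a\|v\|^{3-a}\lesssim \lambda^3+\|v\|^3$ collects everything into the form (\ref{eq.fixed}).

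The Lipschitz estimate (\ref{eq.fixedbis}) is obtained from the factorization $(u_f^\omega+v)^3-(u_f^\omega+w)^3 = (v-w)\bigl((u_f^\omega+v)^2+(u_f^\omega+v)(u_f^\omega+w)+(u_f^\omega+w)^2\bigr)$, applying the same H\"older--Sobolev bookkeeping to the quadratic factor and pulling out $\|v-w\|_{X_T^\sigma}$. The main obstacle is the simultaneous tuning of $\sigma$ and $\varepsilon$: one needs $\sigma > 1/2$ for improvement over Proposition~\ref{prop.fixedbis}, the spatial embedding $L^{1/(1-s)}\hookrightarrow L^{4/(3-2\varepsilon)}$ to handle $u_f^{\omega,3}$, and a strictly positive remaining power of $T$. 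The strict inequality $s > 1/4$ makes $2s-1/2 > 0$, which is exactly what allows an $\varepsilon \in (0, 2s-1/2]$ satisfying all three requirements to exist.
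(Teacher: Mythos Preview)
Your argument is correct and follows essentially the same strategy as the paper: apply Corollary~\ref{cor.stri} to reduce to a dual Strichartz norm of the cubic nonlinearity, control the $u_f^\omega$ factors via the Sobolev embedding $W^{s-1/4,4}(M)\hookrightarrow L^{3/(1-s)}(M)$ (this is exactly where $s>1/4$ enters), control the $v$ factors by a $\sigma$-admissible Strichartz norm, and extract a positive power of $T$ by H\"older in time.

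There are two minor tactical differences worth noting. First, the paper places the nonlinearity in $L^{4/3}_T L^{q'}_x$ with $(4,q)$ satisfying the $(1-\sigma)$ scaling relation, and the factor $T^\kappa$ comes from the $v$ side: the pair $(\tilde p,3q')$ is $\sigma$-admissible with $\tilde p>4$, so H\"older in time yields $\|v\|_{L^4_T L^{3q'}_x}\le T^{\kappa}\|v\|_{X^\sigma_T}$. You instead use the endpoint $(1-\sigma)$-admissible pair $\big(\tfrac{4}{1-2\varepsilon},\tfrac{4}{1+2\varepsilon}\big)$, put all three factors in $L^4_T$, and extract $T^{\varepsilon/2}$ at the end from the embedding $L^{4/3}_T\hookrightarrow L^{4/(3+2\varepsilon)}_T$. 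Both mechanisms produce the same gain. Second, the paper just uses $\|u_f^\omega+v\|^3\le C(\|u_f^\omega\|^3+\|v\|^3)$ rather than expanding all four terms and invoking Young's inequality; your route is slightly longer but equally valid. The paper's explicit choice is $\sigma=\min\big(\tfrac{9}{10},\,\tfrac12+3(s-\tfrac14)\big)$, which corresponds to your constraint $\varepsilon\le 2s-\tfrac12$ up to a harmless factor.
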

\begin{proof}
Let us notice that,
according to~Corollary~\ref{cor.stri}, for $1/2<\sigma<1$ (to be fixed later),
$$ 
\|K_{f}^{\omega}(v)\|_{X^\sigma_T} \leq C\|(u_f+v)^3\|_{L^{p'}([0,T]; L^{q'}(M))},
$$ 
where 
$$ \frac 1 p + \frac 3 q = \frac 3 2 - (1- \sigma), \qquad {p=4}.$$
Notice that $4>2/\sigma$ and thus the choice $p=4$ is allowed.
Using the triangle inequality, we obtain
\begin{equation}\label{eq.base}
 \|K_{f}^{\omega}(v)\|_{X^\sigma_T} 
\leq C (\|u_f^{\omega}\|_{L^4([0,T]; L^{3q'}(M))}^3+\|v\|^3_{L^{4}([0,T];L^{3q'}(M))}).
\end{equation}
Let us first study the second term in the right hand-side of (\ref{eq.base}).
This will be done by invoking the $H^\sigma$, $\sigma>1/2$ well-posedness argument.
Observe that $\frac{1}{q'}=\frac{11}{12}-\frac{\sigma}{3}$. Let $p$ be such that $(p,3q')$ is 
$\sigma$-admissible, i.e. 
$$
\frac{1}{p}+\frac{3}{3q'}=\frac 3 2 -\sigma,\quad
\Rightarrow\quad
\frac{1}{p}=\frac{7}{12}-\frac{2\sigma}{3}\,.
$$
Observe that since $\sigma>1/2$ we have that $p>4$. Therefore thanks to the H\"older inequality (in time)
for $\sigma>1/2$ there exists $\kappa>0$ such that
$$
\|v\|^3_{L^{4}([0,T];L^{3q'}(M))}\leq T^{\kappa}\|v\|_{X^\sigma_T}^{3}\,.
$$
Let us next study the first term in the right hand-side of (\ref{eq.base}).
We first consider the case $s\geq 1$. Using the Sobolev inequality, for $\omega\in E^{c}_{\lambda,f}$,
we can write
$$
\|u_f^{\omega}\|_{L^4([0,1]); L^{3q'}(M))}\leq 
C\|u_f^{\omega}\|_{L^4([0,1]; W^{s-\frac 1 4, 4} (M))}
\leq C \lambda\,.
$$
This ends the proof of~\eqref{eq.fixed} for $s\geq 1$ ($\sigma$ being an arbitrary number in $(1/2,1)$).

Let us next assume that $s<1$. Then for $\omega\in E^{c}_{\lambda,f}$ 
(and according to Sobolev embedding), we have 
$$ 
\|u_f^{\omega}\|_{L^4([0,1]); L^{q_0}(M))}\leq 
C\|u_f^{\omega}\|_{L^4([0,1]; W^{s-\frac 1 4, 4} (M))}
\leq C \lambda
$$
where  
$$ \frac 1 {q_0} = \frac 1 4 - \frac {(s- 1/ 4)} 3\,.
$$
We choose
$$ 
\sigma = \min\big(\frac{9}{10}, \frac{1}{2} + 3(s- \frac 1 4)\big)$$
which fixes the value of $\sigma$ in the case $s<1$.
Then
\begin{equation*}
\frac 1 4 + \frac 3 q = \frac 3 2 - (1- \sigma) \Rightarrow \frac 1 q = 
\frac 1 4 - \frac{( 1/ 2 - \sigma)} 3\Rightarrow \frac 1 {3 q'}\geq \frac 1 4 - 
\frac{(s- 1 /4)} 3= \frac 1 {q_0}\,.
\end{equation*}
As a consequence for $\omega\in E^{c}_{\lambda,f}$, 
using the H\"older inequality in space, we get that for $s<1$, we can 
bound the contribution of the first term in the right hand-side of~\eqref{eq.base} 
as follows
$$
\|u_f^{\omega}\|_{L^4([0,1]); L^{3q'}(M))}\leq \|u_f^{\omega}\|_{L^4([0,1]); L^{q_0}(M))}\leq 
C\|u_f^{\omega}\|_{L^4([0,1]; W^{s-\frac 1 4, 4} (M))}
\leq C \lambda.
$$ 
This ends the proof of~\eqref{eq.fixed}. 
The proof of~\eqref{eq.fixedbis} is similar. Indeed, it suffices to write
\begin{multline*}
\|K_{f}^{\omega}(v)-K_{f}^{\omega}(w)\|_{X^\sigma_T} \leq
\|v-w\|_{L^{4}([0,T]; L^{3q'}(M))}\Big(\|u_{f}^{\omega}\|^2_{L^{4}([0,T]; L^{3q'}(M))}
\\
+\|v\|^2_{L^{4}([0,T]; L^{3q'}(M))}+\|w\|^2_{L^{4}([0,T]; L^{3q'}(M))}\Big)
\end{multline*}
and to use the previous estimates. This completes the proof of Proposition~\ref{prop.fixed}.
\end{proof}

Let us now complete the proof of Theorem~\ref{thm1}.
Let us first consider the case of a randomization induced by a general family of random variables 
satisfying (\ref{norm}). Fix $0<T\leq 1$.
As a consequence of Propositions~\ref{prop.fixedbis} and \ref{prop.fixed}, if $\omega \in E^c_{\lambda,T,f}$ and if 
\begin{equation}\label{eq.cond}
C \lambda^3 + T^\kappa ( 2C \lambda^3)^3\leq  2C \lambda^3,\text{ and } C T^\kappa ( \lambda^2 + \lambda^6)\leq \frac 1 2,  \quad \kappa \geq 0,\, 
(\kappa >0 \text{ if $s>1/4$}),
\end{equation}
then  the map $K_{f}^{\omega}$ is a contraction on the ball of radius $2C\lambda^{3}$ of 
$X^{\sigma}_{T}$. Notice that the condition~\eqref{eq.cond} above is implied
by the following 
$$ T^\kappa \lambda^6 = \epsilon ^6 \ll 1.$$ 
As a consequence, if we define, with $\delta = \kappa /6$, 
$$ \Omega_T = E^c_{\lambda=\epsilon T^{- \delta},T,f}, 
\quad  
\Sigma = \bigcup_{n\in \mathbb{N}^*} \Omega_{1/n},
$$
then
$$
p(\Omega_T) \geq 1 - CT^{1+4\delta}, \quad p(\Sigma )=1
$$
and we obtain the first part in Theorem~\ref{thm1} in the case of general
variables satisfying only (\ref{norm}).
 
Let us finally consider the case of random variables satisfying in addition to (\ref{norm}) the property
(\ref{property}).
In this case $\lambda=\lambda(T)$ is chosen such that
$$ T^\kappa \lambda^6= \epsilon \ll 1$$
and according to Proposition~\ref{prop.fixed}, if $\omega \in
E^c_{\lambda,T,f}$, then the map $K_{f}^{\omega}$ 
is a contraction on the ball of radius $2C\lambda^{3}$ of 
$X^{\sigma}_{T}$. Now according to~\eqref{iyud}, we obtain that if we set
$$ 
\Omega_T = E^c_{\lambda(T),f}, \quad  \Sigma = \bigcup_{n\in \mathbb{N}^*}
\Omega_{1/n},
$$
then
$$
p(\Omega_T)\geq 1 - Ce^{-c/T^\delta},\,\, \delta, C,c>0, \quad p(\Sigma )=1.
$$
This completes the proof of Theorem~\ref{thm1}.
\qed
\begin{remarque}
{\rm
Let us observe that in Theorem~\ref{thm1}, if $s>1/4$ then $\sigma>1/2$.
}
\end{remarque}
\section{Manifolds with boundary}\label{sec.bord}
In this section we consider the case of Dirichlet or Neumann boundary conditions. For conciseness, we shall drop the subscript $D,N$.
\subsection{Strichartz and spectral projector estimates}
The following spectral projector estimate is proved by Smith and Sogge~\cite{SmSo06}
\begin{proposition}\label{chrisbis}
There exists $C>0$ such that for every $n\geq 1$,
$$
\|e_n\|_{L^{5}(M)}\leq C(1+\lambda_{n}^2)^{\frac{1}{5}}\,.
$$
\end{proposition}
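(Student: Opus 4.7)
The plan is to follow the spectral cluster/wave parametrix strategy that Sogge introduced in the boundaryless case (and that already underlies Proposition~\ref{chris}), with the key modification, due to Smith and Sogge, needed to handle reflections of wave bicharacteristics at $\partial M$. First I would reduce the individual eigenfunction bound to a mapping property for the unit spectral cluster operator $\chi_\lambda = \psi(\sqrt{-{\mathbf\Delta}} - \lambda)$, where $\psi \in \mathcal{S}(\R)$ is chosen so that $\widehat\psi$ is supported in a small interval about $0$ and $\psi(0) = 1$. Since $\chi_\lambda e_n = \psi(\lambda_n - \lambda)\, e_n$, the stated eigenfunction estimate follows from the unit-band bound $\|\chi_\lambda\|_{L^2(M) \to L^5(M)} \leq C\lambda^{2/5}$ evaluated at $\lambda = \lambda_n$.

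By $TT^*$ duality this is equivalent to $\|\chi_\lambda \chi_\lambda^*\|_{L^{5/4}(M) \to L^5(M)} \leq C\lambda^{4/5}$, and the Schwartz kernel of $\chi_\lambda \chi_\lambda^*$ can be written as
$$K_\lambda(x,y) = \int e^{-i\lambda t}\, |\psi(t)|^2\, \cos(t\sqrt{-{\mathbf\Delta}})(x,y)\, dt,$$
so the question reduces to an oscillatory time integral of the half-wave kernel against a compactly supported smooth cutoff. For $M$ without boundary one inserts the Hadamard parametrix for $\cos(t\sqrt{-{\mathbf\Delta}})$, a zeroth-order Fourier integral operator whose canonical relation is the lifted geodesic flow, and obtains a pointwise dispersive bound of the shape $|K_\lambda(x,y)| \leq C\lambda^{d-1}$ in dimension $d$. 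Combined with the trivial $L^2 \to L^2$ bound of size $O(1)$ for $\chi_\lambda \chi_\lambda^*$, a Stein-type complex interpolation then delivers the desired $L^{5/4} \to L^5$ estimate with exponent $4/5$ on $\lambda$ (for $d = 3$).

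The entire obstacle, and the whole content of Smith and Sogge~\cite{SmSo06}, lies in constructing a short-time parametrix for $e^{it\sqrt{-{\mathbf\Delta}}}$ on $M$, valid for $|t| \leq \epsilon$ with $\epsilon$ independent of $\lambda$, that correctly incorporates successive reflections off $\partial M$. The idea is to decompose trajectories according to their reflection pattern, to realize the contribution of each pattern as a short-time Fourier integral operator via a local method of images (and a Melrose--Taylor type diffractive parametrix near the glancing set), and to sum over reflection patterns using a microlocal partition that separates the transverse, glancing, and gliding regimes. Once such a parametrix is in place the dispersive bound on $K_\lambda$ of the boundaryless case survives, and the reduction above concludes the proof; the delicate microlocal analysis of reflected, glancing and gliding rays is the genuinely hard part, and the range of exponents $p$ for which the Sogge bound $\sigma(p)$ is attained is dictated precisely by the quality of this parametrix.
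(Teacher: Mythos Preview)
The paper gives no proof of this proposition: it simply quotes it from Smith--Sogge \cite{SmSo06}. So there is no ``paper's proof'' to compare against, and the question is only whether your sketch faithfully represents (or could substitute for) the Smith--Sogge argument. It does not, on two counts.

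First, a computation. From the crude pointwise bound $|K_\lambda(x,y)|\leq C\lambda^{d-1}$ (i.e.\ $\|\chi_\lambda\chi_\lambda^*\|_{L^1\to L^\infty}\leq C\lambda^2$ in $d=3$) and the $L^2\to L^2$ bound $O(1)$, Riesz--Thorin at $L^{5/4}\to L^5$ gives $\lambda^{6/5}$, not $\lambda^{4/5}$. ``Stein--type complex interpolation'' does not rescue this unless you feed in the genuine oscillatory decay of the kernel off the diagonal, $|K_\lambda(x,y)|\lesssim \lambda^{(d-1)/2}(\lambda\, d_g(x,y))^{-(d-1)/2}$, and even then what this argument produces in the boundaryless case is the critical exponent $p_c=2(d+1)/(d-1)=4$; the $L^5$ bound with exponent $2/5$ then follows by interpolating the $L^4$ estimate against the $L^\infty$ Weyl bound, not by the interpolation you wrote.

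Second, and more importantly, the description of \cite{SmSo06} is not accurate. Smith and Sogge do not build a short--time parametrix by a ``local method of images'' together with a Melrose--Taylor diffractive parametrix near glancing. Their strategy is to reflect the metric across $\partial M$ (even/odd extension according to the boundary condition), obtaining a boundaryless manifold whose metric is only Lipschitz across the interface, and then to prove spectral--cluster bounds for such low--regularity metrics via wave--packet and angular dyadic decompositions. Crucially, the boundaryless dispersive estimate does \emph{not} survive unchanged: the loss coming from directions nearly tangent to $\partial M$ is exactly what pushes the critical Lebesgue exponent from $4$ to $5$ in three dimensions. So the sentence ``once such a parametrix is in place the dispersive bound on $K_\lambda$ of the boundaryless case survives'' inverts the actual phenomenon; the content of \cite{SmSo06} is precisely to quantify that loss and show that, despite it, the Sogge exponent $\tfrac{d-1}{2}-\tfrac{d}{q}$ is still reached at $q=5$.
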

This  estimate implies (see~\cite[Theorem 2]{BuLePl06}) the following Strichartz inequality
\begin{proposition}\label{stribord}
There exists $C>0$ such that for 
$$ \|e^{\pm it \sqrt{- {\mathbf \Delta}} }f\|_{L^5((0,1)\times M)} \leq C \|f\|_{H^{\frac 7 {10}}(M)}\,.
$$
\end{proposition}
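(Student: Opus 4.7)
The plan is to deduce the Strichartz estimate from the spectral projector bound of Proposition~\ref{chrisbis} via a standard frequency‐localized reduction, then a $TT^{*}$ argument in time, following the strategy of Burq–Lebeau–Planchon~\cite{BuLePl06}. Since the bound is symmetric in the sign of $t$ and in the sign of the phase, we may work with $e^{it\sqrt{-\mathbf{\Delta}}}$.

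First, by a smooth Littlewood–Paley decomposition of $f$ along the spectrum of $\sqrt{-\mathbf{\Delta}}$, write $f = \sum_{N\geq 1}\Delta_N f$ where $\Delta_N$ localizes to a dyadic annulus $\sqrt{-\mathbf{\Delta}}\sim N$. Using the triangle inequality in $L^5_{t,x}$ together with the fact that $\|f\|_{H^{7/10}}^{2}\sim \sum_N N^{7/5}\|\Delta_N f\|_{L^{2}}^{2}$, it suffices (after summing a convergent geometric series in $N$, using a small loss that we absorb by replacing $7/10$ by $7/10+\varepsilon$ and then removing $\varepsilon$ by interpolation with the trivial energy bound at regularity $s>7/10$) to establish the frequency-localized estimate
\begin{equation}\label{eq.goalloc}
\|e^{it\sqrt{-\mathbf{\Delta}}} g\|_{L^{5}((0,1)\times M)}\leq C N^{7/10}\|g\|_{L^{2}(M)}
\end{equation}
for every $g$ spectrally localized in $\sqrt{-\mathbf{\Delta}}\sim N$.

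For~\eqref{eq.goalloc}, the first observation is that Proposition~\ref{chrisbis}, together with orthogonality of clusters of width $1$ inside the dyadic block (summing the clusters via Cauchy–Schwarz in the $N$ unit intervals composing $[N,2N]$), yields the fixed-time cluster bound $\|g\|_{L^{5}(M)}\leq C N^{2/5}\|g\|_{L^{2}(M)}$ for such $g$. Integrating this over $t\in(0,1)$ gives only the weaker $L^{\infty}_{t}L^{5}_{x}$ estimate with loss $N^{2/5}$; to upgrade to $L^{5}_{t,x}$ we need to gain an extra $3/10$ derivatives from the time oscillations of the wave group. This is done by a $TT^{*}$ argument: the operator $T_{N}:g\mapsto e^{it\sqrt{-\mathbf{\Delta}}}g$ restricted to frequency $N$ has $T_{N}T_{N}^{*}$ given by convolution in $t$ with the truncated half-wave kernel $K_{N}(t-s,x,y)$ of frequency $N$. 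On a compact manifold, for $|t-s|\leq 1$ and before caustics form, the short-time parametrix of Smith-Sogge in the boundary case (and Kapitanskii/H\"ormander without boundary) provides dispersive bounds for $K_{N}$ which, combined with the fixed-time $L^{5}_{x}\leftarrow L^{5/4}_{x}$ bound obtained by duality from the cluster estimate, yield via Hardy–Littlewood–Sobolev in the time variable exactly the gain of $3/10$ derivatives, hence~\eqref{eq.goalloc}.

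The main obstacle is precisely the short-time dispersive bound for $K_{N}$ near the boundary, which is where the classical Fourier integral operator parametrix breaks down. This is exactly the technical input isolated by Smith–Sogge~\cite{SmSo06} and which allows Burq–Lebeau–Planchon~\cite{BuLePl06} to deduce the scaling-critical $L^{5}_{t,x}$ Strichartz estimate at regularity $7/10$ (note that $1/5+3/5=3/2-7/10$ is exactly scale-invariant for the wave equation in dimension three). In practice I would simply invoke Theorem~2 of~\cite{BuLePl06}, which delivers the conclusion in the form stated.
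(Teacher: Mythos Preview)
Your final sentence---simply invoke \cite[Theorem~2]{BuLePl06}---is exactly the paper's proof: the authors state that Proposition~\ref{chrisbis} implies Proposition~\ref{stribord} by that reference, with no further argument. So at the level of what is actually being cited, you agree with the paper.

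The sketch you give of what happens \emph{inside} that reference, however, is misleading on the central point. You describe the mechanism as a $TT^{*}$ argument driven by a ``short-time dispersive bound for $K_{N}$'' near the boundary, attributed to Smith--Sogge. No such pointwise dispersive estimate for the wave kernel is available on general domains with boundary; indeed, the expected failure of classical dispersion (glancing and multiply reflected rays) is precisely why the route through spectral clusters is taken. What Smith--Sogge prove is exactly Proposition~\ref{chrisbis}, the $L^{2}\to L^{5}$ bound for \emph{unit-width} spectral projectors, and the content of \cite[Theorem~2]{BuLePl06} is a direct passage from such cluster bounds to Strichartz estimates that bypasses any kernel estimate. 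Roughly, one decomposes $f=\sum_{k}\chi_{k}f$ with $\chi_{k}$ the projector on $\sqrt{-\mathbf{\Delta}}\in[k,k+1)$, notes that on a unit time interval the pieces $e^{it\sqrt{-\mathbf{\Delta}}}\chi_{k}f$ carry essentially distinct time frequencies $e^{itk}$, and uses this almost-orthogonality in $t$ together with the fixed-time cluster bound to close---no Hardy--Littlewood--Sobolev against a dispersive kernel is involved.

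A smaller point: your claimed fixed-time bound $\|g\|_{L^{5}}\leq CN^{2/5}\|g\|_{L^{2}}$ for $g$ localized to a full dyadic shell $\sqrt{-\mathbf{\Delta}}\sim N$ does \emph{not} follow from Proposition~\ref{chrisbis} by ``Cauchy--Schwarz in the $N$ unit intervals''; that Cauchy--Schwarz costs an extra $N^{1/2}$, giving only $N^{9/10}$. Recovering the sharp dyadic exponent in $L^{5}$ requires a genuine almost-orthogonality argument that you have not supplied.
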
 
By interpolation and duality, we deduce that the Strichartz inequalities~\eqref{eq.homogbis} and~\eqref{eq.inhomog} remain true provided we replace the definition of admissible couples by 
\begin{definition}
Let $0\leq s\leq 1$. A couple $(p,q)$ is $s$-admissible if
$$
\frac 1 p + \frac 3 q = \frac 3 2 -s
$$
and
\begin{equation*}
p \geq 
\begin{cases} 
&\frac 7 {2s} \text{ if } s \leq \frac 7 {10}\,,\\
&5  \text{ if } s \geq \frac 7 {10}\,.
\end{cases}
\end{equation*}
\end{definition}
\subsection{Averaging effects}
\begin{proposition}\label{prop.averbis}
Let $s\geq\frac 2 5$.
Under the assumptions of Theorem~\ref{thm2}, 
for $f=(f_1,f_2)\in {\mathcal H}^{s}(M)$, we consider the free evolution with data $f^{\omega}$, given by 
$$
u_{f}^\omega(t,x) = 
\cos(t\sqrt{- {\mathbf \Delta}}) f_1^\omega+\frac{\sin ( t \sqrt{- {\mathbf \Delta}})f_2^\omega}{ \sqrt{ - {\mathbf \Delta}}}\,.
$$
Then there exists $C>0$ such that for every $f\in {\mathcal H}^{s}(M)$,
\begin{equation}\label{averbis}
\|(-{\mathbf \Delta} +1)^{\frac s 2 - \frac 1 5}u_f^\omega\|_{L^5(\Omega_\omega\times [0,T]_t\times M_x)} 
\leq C T^{1/5}\|f\|_{{\mathcal H}^{s}(M)}\,.
\end{equation}
In particular, thanks to the Bienaym\'e-Tchebichev inequality, if we set
$$
E_{\lambda,T,f}= \Big\{ \omega \in \Omega\,:\,
\|(-{\mathbf \Delta} +1)^{\frac s 2 - \frac 1 5}u_f^\omega\|_{L^5([0,T]_t\times M_x)} \geq \lambda\Big\}
$$
then there exists $C>0$ such that for every $\lambda>0$, every $f\in {\mathcal H}^{s}(M)$,
$$ 
p(E_{\lambda,T,f})\leq CT \lambda ^{-5}\,
\|f\|^5_{{\mathcal H}^{s}(M)}\,.
$$
\end{proposition}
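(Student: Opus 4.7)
The plan is to follow verbatim the structure of the proof of Proposition~\ref{prop.aver}, making three systematic substitutions: replace $L^4$ by $L^5$ everywhere, invoke Lemma~\ref{lem.k} with $k=3$ in place of $k=2$, and apply the Smith--Sogge bound (Proposition~\ref{chrisbis}) in place of the Sogge bound (Proposition~\ref{chris}). The assumption~\eqref{normbis} of Theorem~\ref{thm2}, which provides uniform sixth moments, is precisely what is needed to access Lemma~\ref{lem.k} at the index $p=5$ (we need $p\leq 2k=6$). The Smith--Sogge estimate gives the right loss $(1+\lambda_n^2)^{1/5}$ on $L^5$, which is exactly matched by the shift $s/2 - 1/5$ in the statement.

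First, I would reduce to estimating $(-{\mathbf \Delta}+1)^{s/2 - 1/5}\, e^{it\sqrt{-{\mathbf \Delta}}} f_1^\omega$: expanding cosine and sine and handling the zero eigenvalue separately, the other pieces are analogous. Writing $f_1 = \sum_n \alpha_n e_n$ and setting $\widetilde\alpha_n = (1+\lambda_n^2)^{s/2 - 1/5}\alpha_n$, one has $\|f_1\|_{H^s(M)}^2 = \sum_n |\widetilde\alpha_n|^2 (1+\lambda_n^2)^{2/5}$. Then, for each fixed $(t,x)$, Lemma~\ref{lem.k} with $k=3$ applied to the randomized series yields
$$
\Bigl\|\sum_n \widetilde\alpha_n e^{it\lambda_n} e_n(x) h_n(\omega)\Bigr\|_{L^5(\Omega)} \leq C \Bigl(\sum_n |\widetilde\alpha_n e_n(x)|^2\Bigr)^{1/2}.
$$

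Next, integrating the fifth power in $(t,x)$ over $[0,T]\times M$ (Fubini is legitimate by the uniform sixth moment bound) produces the factor $T^{1/5}$ and reduces the task to estimating $\|(\sum_n |\widetilde\alpha_n e_n|^2)^{1/2}\|_{L^5(M)}$. Writing this as $\|\sum_n |\widetilde\alpha_n e_n|^2\|_{L^{5/2}(M)}^{1/2}$ and applying Minkowski in $L^{5/2}(M)$ (admissible since $5/2 \geq 1$) gives the bound $(\sum_n |\widetilde\alpha_n|^2 \|e_n\|_{L^5(M)}^2)^{1/2}$. Finally, Proposition~\ref{chrisbis} yields $\|e_n\|_{L^5(M)} \leq C(1+\lambda_n^2)^{1/5}$, and the sum collapses to $C\|f_1\|_{H^s(M)}$. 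This establishes~\eqref{averbis}. The probabilistic bound on $p(E_{\lambda,T,f})$ then follows immediately from Markov's inequality applied to the fifth power.

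There is no real obstacle here: every analytic input (the sixth moment hypothesis, the Smith--Sogge $L^5$ eigenfunction bound, Minkowski at exponent $5/2$, and Lemma~\ref{lem.k}) has been set up in advance, and the numerology $1/5 + 2/5 = 3/5$ is arranged so that the loss in the spectral projector exactly matches the Sobolev weight built into $\widetilde\alpha_n$. The only point meriting a short verification is that $2/k = 5$ is indeed covered by Lemma~\ref{lem.k}, which is why the hypothesis in Theorem~\ref{thm2} is strengthened from fourth to sixth moments compared to Theorem~\ref{thm1}.
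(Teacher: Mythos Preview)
Your proposal is correct and follows essentially the same approach as the paper's own proof: reduce to the $e^{it\sqrt{-\mathbf\Delta}}f_1^\omega$ piece, apply Lemma~\ref{lem.k} (with $k=3$, using the sixth-moment hypothesis~\eqref{normbis}) pointwise in $(t,x)$, pull out the $T^{1/5}$ factor since the resulting bound is $t$-independent, pass to $L^{5/2}(M)$ via Minkowski, and close with the Smith--Sogge estimate of Proposition~\ref{chrisbis}. The only cosmetic differences are that the paper keeps the $[0,T]\times M$ integration bundled until the last step, and your final parenthetical ``$2/k=5$'' is a slip for ``$p=5\leq 2k=6$''.
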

\begin{proof}
Using Lemma~\ref{lem.k} we compute
\begin{multline*}\|(-{\mathbf \Delta} +1)^{\frac s 2 - \frac 1 5}e^{it \sqrt{ - {\mathbf \Delta}}}f_1^\omega\|_{L^5(\Omega_\omega\times [0,T]_t\times M_x)}\\
\begin{aligned}
&= \| \sum_{n}e^{it \lambda_n}(1+ \lambda^2_n)^{\frac s 2 - \frac 1 5} \alpha_n e_n(x)h_n( \omega)\|_{L^5(\Omega_\omega\times [0,T]_t\times M_x) }\\
&\leq  C\| \Bigl(\sum_{n}|e^{it \lambda_n}(1+ \lambda^2_n)^{\frac s 2 - \frac 1 5} \alpha_n e_n(x)|^2 \Bigr)^{1/2}\|_{L^5([0,T]_t\times M_x) }\\
&\leq  C\| \sum_{n}|e^{it \lambda^2_n}(1+ \lambda^2_n)^{\frac s 2 - \frac 1 5} \alpha_n e_n(x)|^2 \|^{1/2}_{L^{5/2}([0,T]_t\times M_x) }\\
&\leq C\Bigl(\sum_{n} \| |(1+ \lambda^2_n)^{\frac s 2 - \frac 1 5} \alpha_n e_n(x)|^2 \|_{L^{5/2} ( [0,T]_t\times M_x) }\Bigr)^{1/2}
\end{aligned}
\end{multline*}
Finally, using Proposition~\ref{chrisbis}, we obtain
\begin{multline*}
\|(-{\mathbf \Delta} +1)^{\frac s 2 - \frac 1 5}e^{it \sqrt{ - {\mathbf \Delta}}}f_1^\omega\|_{L^5(\Omega_\omega\times [0,T]_t\times M_x)}\\
\leq CT^{1/5}\Bigl(\sum_{n} | (1+ \lambda^2_n)^{s} \alpha_n |^2 \Bigr)^{1/2}\leq C  T^{1/5}\|f_1\|_{H^s(M)}\,.
\end{multline*}
The contribution of $f_2$ is dealt with similarly. This ends the proof of Proposition~\ref{prop.averbis}.
\end{proof}
\subsection{The fixed point}
In this section we shall prove only the case $s= 8/21$ in Theorem~\ref{thm2}. The case $s>8/21$ and the improved estimate for Gaussian are proved {\em mutatis mutandi} following the strategy developed in Section~\ref{sec.fix}.
Interpolating between~\eqref{averbis} and the trivial bound
\begin{equation*}
\|u_f^\omega\|_{L^2(\Omega_\omega\times [0,T]_t\times M_x)} 
\leq C T^{1/2}\|f\|_{\mathcal{H}^0(M)}\,.
\end{equation*}
gives
\begin{equation}\label{averter}
\|u_f^\omega\|_{L^{14/3}(\Omega_\omega\times [0,T]_t\times M_x)} 
\leq C T^{3/14}\|f\|_{\mathcal{H}^{8/21}(M)}\,.
\end{equation}
As in Section~\ref{sec.fix}, we are looking for a fixed point of the map
$$
K_{f}^{\omega} \,:\,v
\longmapsto 
-\int_0^t 
\frac{\sin((t-\tau) \sqrt{ - {\mathbf \Delta}})} { \sqrt{ - {\mathbf \Delta}}}((u_f+v)^3) (\tau, \cdot) d\tau.$$
Using the Strichartz inequalities in Section~\ref{sec.stri} (with the new definition of admissible couples and consequently of $X^s_T$ and $Y^s_T$ spaces), we obtain 
$$ 
\|K_{f}^{\omega}\|_{X^{2/3}_T} \leq C \|(u_f+v)^3\|_{Y^{1/3}_T}\,.
$$
But (observe that $(21/4, 14/3)$ is a $2/3$-admissible couple)
$$ \|g\|_{L^\infty((0,T); H^{2/3}(M))} + \|g\|_{L^{21/4}((0,T); L^{14/3}(M))} \leq C\|g\|_{X^{2/3}_T}
$$
and (observe that $(21/2, 14/5)$ is a $1/3$-admissible couple)
$$ \|v^3\|_{Y^{1/3}_T} \leq C \|v^3\|_{L^{21/19}((0,T); L^{14/9}(M))}\leq C\|v\|^3_{L^{63/19}((0,T); L^{14/3}(M))}\leq C_T \|v\|^3_{X^{2/3}_T}\,.
$$
These a priori bounds combined with the estimate~\eqref{averter} (and the fact that $21/4 >63/19$) allow to perform the fixed point in the space $X^{2/3}_T$ (for sufficiently small $T$ depending on $\omega$), exactly as in the previous section. This ends the proof of Theorem~\ref{thm2}.

\appendix

\section{Ill posedness on $\mathcal{H}^s(M)$, $s<1/2$.}
The goal of this appendix is to prove the ill-posedness statement claimed in the introduction. For that purpose, we first prove the lack of continuity at $0$ of the flow map on $\mathcal{H}^s(M)$, $s<1/2$. More precisely we have the following result.
\begin{proposition}\label{ill}
Let us fix $s\in ]0,1/2[$. Then there exists $\delta >0$ and a sequence  $(t_n)_{n=1}^{\infty}$ 
of positive numbers tending to zero and a sequence $(u_n(t))_{n=1}^{\infty}$ of $C^{\infty}(M)$ functions
such that
$$
(\partial_{t}^{2}-{\mathbf \Delta})u_n+u_n^3=0
$$
with
$$
\|u_{n}(0)\|_{{\mathcal H}^{s}(M)}\leq C \log(n)^{-\delta}\rightarrow_{n\rightarrow + \infty} 0
$$
but
$$
\|u_{n}(t_n)\|_{{\mathcal H}^{s}(M)}\geq C \log(n)^{\delta}\rightarrow_{n\rightarrow + \infty} +\infty.
$$
\end{proposition}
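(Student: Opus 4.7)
The strategy is a small-dispersion geometric-optics construction in the spirit of Lebeau~\cite{L} and Christ--Colliander--Tao~\cite{ChCoTa03}, adapted to the variable-coefficient manifold setting. The plan is to construct initial data of large amplitude $A_n$ concentrated at a small spatial scale $\lambda_n$; on a short time interval the wave dynamics is dominated by the pointwise cubic ODE $\ddot{U}+U^3=0$, whose periodic solution turns large amplitude into high-frequency spatial oscillations as time evolves, producing norm inflation in $H^s$ for any $s>0$.

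Concretely, fix a chart centered at a point $x_0\in M$ and a bump $\phi\in C_c^\infty(\R^3)$ with $\phi(0)=1$, supported in the unit ball, transferred to $M$ via the chart. Let $\Phi$ denote the smooth $T_0$-periodic solution of $\Phi''+\Phi^3=0$, $\Phi(0)=1$, $\Phi'(0)=0$. For parameters $A_n,\lambda_n,t_n$ to be chosen, I introduce the ansatz
\[
u_n^{\mathrm{app}}(t,x)=A_n\,\phi(x/\lambda_n)\,\Phi\bigl(A_n\phi(x/\lambda_n)\,t\bigr),
\]
which satisfies $\partial_t^2 u_n^{\mathrm{app}}+(u_n^{\mathrm{app}})^3=0$ pointwise in $x$ with Cauchy data $(A_n\phi(\cdot/\lambda_n),0)$, and whose only residual in the wave equation is $-\Delta u_n^{\mathrm{app}}$. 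Let $u_n$ be the exact smooth solution with the same initial data and set $w_n=u_n-u_n^{\mathrm{app}}$. Using that $\Phi$, as a nontrivial Jacobi elliptic function, has nonzero Fourier coefficients at infinitely many harmonics, a direct stationary-phase/Fourier computation gives
\[
\|u_n^{\mathrm{app}}(0)\|_{H^s(M)}\simeq A_n\,\lambda_n^{3/2-s},\qquad \|u_n^{\mathrm{app}}(t)\|_{H^s(M)}\simeq A_n\,(A_n t)^{s}\,\lambda_n^{3/2-s}\text{ for $A_n t\gg 1$.}
\]

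To quantify the remainder, I rescale to small-dispersion variables $\tau=A_n t$, $y=x/\lambda_n$; then $v_n(\tau,y)=A_n^{-1}u_n(\tau/A_n,\lambda_n y)$ solves
\[
\partial_\tau^2 v_n - \varepsilon_n\,\Delta_{g_n} v_n + v_n^3 = 0,\qquad \varepsilon_n=(A_n\lambda_n)^{-2},\qquad (v_n,\partial_\tau v_n)\vert_{\tau=0}=(\phi,0),
\]
where $\Delta_{g_n}$ has coefficients converging to those of $\Delta_{\R^3}$ as $\lambda_n\to 0$. The remainder $r_n=v_n-v_0$, with $v_0(\tau,y)=\phi(y)\Phi(\phi(y)\tau)$, satisfies a linearized wave equation with source $O(\varepsilon_n)$. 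I then choose $\lambda_n=(\log n)^{-\alpha}$, $A_n=(\log n)^\beta$, $t_n=\lambda_n\to 0$. The three constraints are: initial smallness $A_n\lambda_n^{3/2-s}\to 0$ ($\beta<\alpha(3/2-s)$), validity of the ODE regime $A_n\lambda_n\to\infty$ ($\beta>\alpha$), and final inflation $A_n^{1+s}\lambda_n^{3/2}\to\infty$ ($\beta(1+s)>3\alpha/2$). The window $\beta/\alpha\in(3/(2(1+s)),\,3/2-s)$ is nonempty precisely when $(3/2-s)(1+s)>3/2$, i.e. when $s(1-2s)>0$, which exactly characterizes $s\in(0,1/2)$; both endpoints exceed $1$ (again using $s<1/2$), so the ODE-regime constraint $\beta>\alpha$ is automatic. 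Any such choice yields $\|u_n(0)\|_{\mathcal{H}^s(M)}\leq C(\log n)^{-\delta}$ and $\|u_n(t_n)\|_{\mathcal{H}^s(M)}\geq C(\log n)^{\delta}$ for some $\delta>0$.

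The main obstacle is the quantitative control of $r_n$. A brute-force Gr\"onwall on the modified energy $\int|\partial_\tau r_n|^2+\varepsilon_n|\nabla r_n|^2+3v_0^2 r_n^2$ produces an exponential factor $e^{C\tau}$, which overwhelms the smallness of $\varepsilon_n$ once $\tau_n=A_n t_n\to\infty$, ruining any hope of norm inflation. The fix must exploit the conservative structure of the ODE linearization: $\partial_\tau^2+3\Phi^2$ is the Hessian of the conserved ODE energy at its periodic orbit, and one explicit solution is $\Phi'$, periodic; by Floquet theory (monodromy eigenvalues on the unit circle) the second independent solution grows only linearly in $\tau$. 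Transferring this bound to the PDE by an energy estimate in a scale-adapted norm yields $\|r_n(\tau)\|_{H^s(\R^3)}\leq C\varepsilon_n P(\tau)$ with $P$ polynomial, so that $\|w_n(t_n)\|_{H^s(M)}\ll\|u_n^{\mathrm{app}}(t_n)\|_{H^s(M)}$ in the parameter window above. Variable-coefficient corrections from $\Delta_{g_n}$ are of order $\lambda_n$ after rescaling and are absorbed via finite speed of propagation, freezing the metric at $x_0$ up to a harmless error.
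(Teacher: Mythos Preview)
Your ansatz $u_n^{\mathrm{app}}=A_n\phi(x/\lambda_n)\Phi(A_n\phi(x/\lambda_n)t)$ and the mechanism of norm inflation through the oscillation of $\Phi$ are exactly those of the paper. The divergence is in the parameter regime, and this is where your plan runs into trouble.

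Your claim that the source in the remainder equation is $O(\varepsilon_n)$ is not correct: each $y$-derivative of $v_0(\tau,y)=\phi(y)\Phi(\phi(y)\tau)$ brings down a factor of $\tau$ through the chain rule, so $\Delta_y v_0=O(\tau^2)$ and the forcing is $O(\varepsilon_n\tau^2)$. With your choice $t_n=\lambda_n$ one has $\tau_n=A_n\lambda_n$ and $\varepsilon_n=(A_n\lambda_n)^{-2}=\tau_n^{-2}$, so at the final time the source has size $\varepsilon_n\tau_n^2=1$: no smallness remains. Even granting the Floquet improvement (fundamental solutions of $\partial_\tau^2+3v_0^2$ growing at most linearly in $\tau$), Duhamel gives $\|r_n(\tau_n)\|_{L^2}\lesssim\varepsilon_n\int_0^{\tau_n}(\tau_n-\sigma)\sigma^2\,d\sigma\sim\varepsilon_n\tau_n^4=\tau_n^2$, which dominates $\|v_0(\tau_n)\|_{H^s}\sim\tau_n^s$. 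So the remainder swamps the approximate solution and the inflation is not visible in $u_n$. The Floquet route can be made to work, but it requires decoupling $t_n$ from $\lambda_n$ (taking $t_n\ll\lambda_n$) and a careful count of secular powers; your plan does neither.

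The paper sidesteps all of this by a different choice of scales: it takes $\lambda_n=1/n$, $A_n=\kappa_n n^{3/2-s}$ with $\kappa_n=(\log n)^{-\delta_1}$, and $t_n=(\log n)^{\delta_2}n^{-(3/2-s)}$. The point is that the ODE time $\tau_n=A_nt_n=(\log n)^{\delta_2-\delta_1}$ grows only as a power of $\log n$, while the integrated source in the semiclassical energy carries a factor $n^{s-1/2}$ (a negative power of $n$). A direct Gr\"onwall then gives
\[
E_n(w_n(t_n))\lesssim (\log n)^{c}\, n^{s-1/2}\,\exp\big(C(\log n)^{c'\delta_2}\big),
\]
and since $\exp\big((\log n)^{c'\delta_2}\big)=n^{o(1)}$ for $\delta_2$ small, the exponential is absorbed by $n^{s-1/2}$. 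So your assertion that the Gr\"onwall exponential ``ruin[s] any hope of norm inflation'' is true only in your all-logarithmic regime; the paper's polynomial concentration is engineered precisely so that a crude Gr\"onwall already closes, with no Floquet input needed. The lower bound on $\|v_n(t_n)\|_{H^s}$ is then obtained from an explicit lemma on $\|\psi(x)V(\lambda\phi(x))\|_{H^s}\gtrsim\lambda^s$.
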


\begin{proof}
The proof is strongly inspired by the considerations in \cite{ChCoTa03} where the Euclidean space is 
considered instead of a riemannian manifold $(M,g)$.
The only advantage of our argument with respect to
\cite{ChCoTa03} is that we avoid the scaling consideration of \cite{ChCoTa03} and thus we can keep 
the argument local in space and thus it can still work for (\ref{1}) posed on a compact manifold.
A similar discussion in the context of NLS may be found in \cite{BGT}. 

Working in a local coordinate system near a fixed point of $M$, we consider an initial data concentrating
at this fixed point. Namely, we consider (\ref{1}) subject to initial conditions
\begin{equation}\label{data}
(f_{1,n}(x),f_{2,n}(x))=(\kappa_{n}n^{\frac{3}{2}-s}\varphi(nx), 0),\qquad n\gg 1\,,
\end{equation}
where $\varphi$ is a nontrivial bump function on $\R^3$ and
$$
\kappa_{n}\equiv [\log(n)]^{-\delta_1},
$$
with $\delta_1>0$ to be fixed later.
The equation (\ref{1}) being $H^1(M)$ sub critical (and defocusing), we obtain that 
(\ref{1}) with data given by (\ref{data}) has a unique global smooth solution which we denote by $u_{n}$.
We will consider the solution of (\ref{1}) with data (\ref{1}) only for small times and thanks to the finite
propagation speed of the wave equation the analysis is local.
Next, let us denote by $V(t)$ the global solution of the ODE
\begin{equation}\label{V}
V''+V^3=0,\quad V(0)=1,\,\, V'(0)=0.
\end{equation}
Multiplying (\ref{V}) by $V'$, we deduce that $V(t)$ is a periodic function.
We next denote by $v_n$ the solution of
\begin{equation}\label{ode}
\partial_{t}^2v_{n}+v_{n}^{3}=0,\quad (v_{n}(0),\partial_{t}v_{n}(0))=
(\kappa_{n}n^{\frac{3}{2}-s}\varphi(nx), 0).
\end{equation}
It is now clear that
$$
v_{n}(t,x)=\kappa_{n}n^{\frac{3}{2}-s}\varphi(nx)V\Big(t\kappa_{n}n^{\frac{3}{2}-s}\varphi(nx)\Big).
$$  
We next consider the semi-classical energy
$$
E_{n}(u)\equiv n^{-(1-s)}\big(\|\partial_{t}u\|_{L^{2}(M)}^{2}+
\|\nabla u\|_{L^{2}(M)}^{2}\big)^{\frac{1}{2}}+n^{-(2-s)}\big(\|\partial_{t}u\|_{H^{1}(M)}^{2}+\|\nabla
u\|_{H^{1}(M)}^{2}\big)^{\frac{1}{2}}\,.
$$
We are going to show that for very small times $u_n$ and $v_n$ are close with respect to $E_n$
but these small times are long enough to get the needed amplification of the $H^s$ norm (this
amplification is a phenomenon only related to the solution of (\ref{ode})).
Here is the precise statement.
\begin{lemme}\label{compar}
There exist $\varepsilon>0$, $\delta_2>0$ and $C>0$ such that, if we set
$$
t_{n}\equiv [\log(n)]^{\delta_2}n^{-(\frac{3}{2}-s)}
$$
then for every $n\gg 1$, every $t\in [0,t_n]$,
$
E_{n}(u_{n}(t)-v_{n}(t))\leq Cn^{-\varepsilon}\,.
$
Moreover, 
\begin{equation}\label{s}
\|u_{n}(t)-v_n(t)\|_{H^{s}(M)}\leq Cn^{-\varepsilon}\,.
\end{equation}
\end{lemme}
\begin{proof}
Set $w_{n}=u_{n}-v_{n}$. Then $w_n$ solves the equation
\begin{equation*}
(\partial_{t}^{2}-{\mathbf \Delta})w_n={\mathbf \Delta} v_n-3v_{n}^{2}w_{n}-3v_{n}w_{n}^{2}-w_{n}^{3},\quad
(w_{n}(0,\cdot),\partial_{t}w_{n}(0,\cdot))=(0,0)\,.
\end{equation*}
Set
$$
F\equiv{\mathbf \Delta} v_n-3v_{n}^{2}w_{n}-3v_{n}w_{n}^{2}-w_{n}^{3}\,.
$$
By the energy inequality inequality for the wave equation, we get
\begin{equation*}
\frac{d}{dt}\Big(E_{n}(w_n(t))\Big)
\leq C n^{-(2-s)}\|F(t,\cdot)\|_{H^1(M)}+Cn^{-(1-s)}\|F(t,\cdot)\|_{L^2(M)}\,.
\end{equation*}
We have for $t\in[0,t_n]$,
\begin{equation*}
\|{\mathbf \Delta}(v_n)(t,\cdot)\|_{H^1(M)}\leq C[\log(n)]^{3\delta_2}n^{3-s},\quad
\|{\mathbf \Delta}(v_n)(t,\cdot)\|_{L^2(M)}\leq C[\log(n)]^{2\delta_2}n^{2-s}\,.
\end{equation*}
Therefore
\begin{equation}\label{energia}
\frac{d}{dt}\Big(E_{n}(w_n(t))\Big)
\leq C\Big([\log(n)]^{3\delta_2}n+n^{-(2-s)}\|G(t,\cdot)\|_{H^1(M)}+n^{-(1-s)}\|G(t,\cdot)\|_{L^2(M)}\Big)\,,
\end{equation}
where
$$
G\equiv -3v_{n}^{2}w_{n}-3v_{n}w_{n}^{2}-w_{n}^{3}\,.
$$
Writing for $t\in [0,t_n]$,
$$ w_n(t,x) = \int_0^t \partial_s w_n(s, x) ds,
$$
we obtain
\begin{equation}\label{i}
\|w_{n}(t,\cdot)\|_{H^k(M)}\leq C[\log(n)]^{\delta_2}n^{-(\frac{3}{2}-s)}
\sup_{0\leq \tau\leq t}\|\partial_{t}w_{n}(\tau,\cdot)\|_{H^{k}(M)}\,.
\end{equation}
Moreover, we have that for $t\in [0,t_n]$,
\begin{equation}\label{ii}
\|\nabla^{k}v_{n}(t,\cdot)\|_{L^{\infty}(M)}\leq C[\log(n)]^{k\delta_2}n^{\frac{3}{2}-s+k}
\end{equation}
and thanks to the Gagliardo-Nirenberg inequality,
\begin{equation}\label{iii}
\|w_{n}(t,\cdot)\|_{L^{\infty}(M)}\leq C\|w_{n}(t,\cdot)\|_{H^2(M)}^{\frac{3}{4}}\|w_n(t,\cdot)\|_{L^2(M)}^{\frac{1}{4}}
\leq Cn^{\frac{3}{2}-s}E_{n}(w_{n}(t))\,.
\end{equation}
Set
$$
e_{n}(w_n(t))\equiv \sup_{0\leq \tau\leq t}E_{n}(w_n(\tau))\,.
$$
Using (\ref{i}), (\ref{ii}) and (\ref{iii}), we get that for $l=1,2$,
$$
n^{-(l-s)}\|G(t,\cdot)\|_{H^{l-1}(M)}\leq C[\log(n)]^{l\delta_2}n^{\frac{3}{2}-s}\big(e_{n}(w_n)+[e_{n}(w_n)]^3\big)\,.
$$
Therefore, coming back to (\ref{energia}), we get
\begin{equation*}
\frac{d}{dt}\Big(E_{n}(w_n(t))\Big)\leq 
C[\log(n)]^{3\delta_2}n+C[\log(n)]^{l\delta_2}n^{\frac{3}{2}-s}\big(e_{n}(w_n)+[e_{n}(w_n)]^3\big)\,.
\end{equation*}
We first suppose that $e_{n}(w_n(t))\leq 1$ which holds for small values of $t$
since $w_{n}(0)=0$. 
Thanks to a Gronwall lemma argument for $t\in [0,t_n]$,
$$
e_{n}(w_{n}(t))
\leq 
C[\log(n)]^{\delta_2}n^{s-\frac{1}{2}}e^{Ct[\log(n)]^{2\delta_2}n^{\frac{3}{2}-s}}
\leq 
C[\log(n)]^{\delta_2}n^{s-\frac{1}{2}}e^{C[\log(n)]^{2\delta_2}}\,.
$$
(one should see $\delta_2$ as $3\delta_2-2\delta_2$ and $s-1/2$ as $1-(3/2-s)$).
Since $s<1/2$, by taking $\delta_2>0$ small enough, we obtain that there exists $\varepsilon>0$ such that
$$
E_{n}(w_n(t))\leq Cn^{-\varepsilon}
$$
and in particular one has for $t\in [0,t_n]$,
\begin{equation}\label{parvo}
\|\partial_{t}w_{n}(t,\cdot)\|_{L^2(M)}+\|w_{n}(t,\cdot)\|_{H^1(M)}\leq
Cn^{1-s-\varepsilon}\,.
\end{equation}
We next estimate $\|w_{n}(t,\cdot)\|_{L^2}$. 
We may write for $t\in[0,t_n]$,
$$
\|w_{n}(t,\cdot)\|_{L^2(M)}= \|\int_0^t \partial_tw_{n}(s,\cdot)\|_{L^2(M)}\leq ct_{n}\sup_{0\leq\tau\leq t}\|\partial_{t}w_{n}(\tau,\cdot)\|_{L^2(M)}\,.
$$
Thanks to (\ref{parvo}) and the definition of $t_n$, we get
$$
\|w_{n}(t,\cdot)\|_{L^2(M)}\leq
C[\log(n)]^{\delta_2}n^{-(\frac{3}{2}-s)}n^{1-s}n^{-\varepsilon}\,.
$$
Therefore, since $s<1/2$, 
\begin{equation}\label{vtoro}
\|w_{n}(t,\cdot)\|_{L^2(M)}\leq Cn^{-s-\varepsilon}\,.
\end{equation}
An interpolation between (\ref{parvo}) and (\ref{vtoro}) yields (\ref{s}). 
This completes the proof of Lemma~\ref{compar}.
\end{proof}
Using Lemma~\ref{compar}, we may write
$$
\|u_{n}(t_n,\cdot)\|_{H^{s}(M)}\geq \|v_{n}(t_n,\cdot)\|_{H^{s}(M)}-Cn^{-\varepsilon}\,.
$$
On the other hand from the representation of $v_n$, we obtain for $n$ large enough
\begin{equation}\label{convex}
\|v_{n}(t_n,\cdot)\|_{H^{s}(M)}\geq C\kappa_{n}(t_n\kappa_{n}n^{\frac{3}{2}-s})^{s}
=C[\log(n)]^{-(s+1)\delta_1+s\delta_2}\,.
\end{equation}
Indeed this estimate is the consequence of the following lemma.
\begin{lemme}\label{lem.expl}
Consider a smooth non constant $2 \pi$ periodic function $V$ and  two functions $\psi,\phi \in C^\infty_0( \mathbb{R}^d)$ such that $\phi \psi $ is not identically vanishing. Then there exists $C>0$ such that for any $\lambda >1$ and any $s\geq 0$
$$ 
\| \psi (x) V(\lambda \phi(x))\|_{H^s(\mathbb{R}^d)} \geq \frac {\lambda^s} C -C\,.
$$
\end{lemme}
\begin{proof} The multiplication by a smooth function being continuous on $H^s$, it suffices to prove the estimate with $\psi$ replaced by any function $\chi \times \psi$ with $\chi \in C^\infty _0$. As a consequence (and using that $H^s$ is invariant by diffeomorphisms), we can assume that on the support of the function $\psi$, we have $\phi(x) = x_1$. We develop
$$ V(t) = \sum_{n\in \mathbb{Z}}v_n e^{int},\quad |v_n| \leq C_N (1+|n|)^{-N}$$
and replacing the function $V$ by $V-v_0$ (which changes the $H^s$ norm by at most a constant), we can assume $v_0=0$. Choose $n_1\neq 0$ such that $v_{n_1}\neq 0$ ($V$ is non constant). Then
\begin{equation*}\begin{aligned} \| \psi (x) V(\lambda x_1))\|_{H^s(\mathbb{R}^d)}^2
&= \int \Bigl| \sum_n v_n \widehat{\psi} ( \xi_1 - n \lambda, \xi') \Bigr|^2 ( 1+ |\xi|)^{2s} d \xi\\
&\geq\int_{\smash{|(\xi_1- n_1 \lambda, \xi')|\leq 1}} \bigl| \sum_n v_n \widehat{\psi} ( \xi_1 - n \lambda, \xi') \bigr|^2 ( 1+ |\xi|)^{2s} d \xi \\
& \geq  \frac 1 2\int_{\smash{|(\xi_1- n_1 \lambda, \xi')|\leq 1}}\bigl| v_{n_1} \widehat{\psi} ( \xi_1 - n_1 \lambda, \xi') \bigr|^2 ( 1+ |\xi|)^{2s} \\
&\qquad \hfill -2\Bigl|\sum_{n\neq n_1} v_{n} \widehat{\psi} ( \xi_1 - n \lambda, \xi') \Bigr|^2 ( 1+ |\xi|)^{2s} d \xi\,.
\end{aligned}
\end{equation*}
The first term in the right hand side is bounded from below by 
\begin{equation*}
\frac 1 2 \int_{|\xi|\leq 1} \bigl| v_{n_1} \widehat{\psi} ( \xi_1, \xi') \bigr|^2 ( 1+ |\xi_1+n_1 \lambda|)^{2s}
\geq    \frac 1 C |n_1 \lambda|^{2s}- C 
\end{equation*}
whereas the second term is bounded (in absolute value) by 
\begin{multline*}
2 \int_{|\xi|\leq 1}\Bigl|\sum_{n \neq n_1} v_{n}\widehat{\psi} ( \xi_1 +(n_1- n) \lambda, \xi') \Bigr|^2 ( 1+ |\xi|+ |n_1 \lambda|)^{2s} d \xi\\\leq  C \Bigl(\sum_{n\neq n_1} |v_n||(n_1- n) \lambda -1|^{-N} |n_1 \lambda|^{s}\Bigr)^2
\leq C 
\end{multline*}
which ends the proof of Lemma~\ref{lem.expl}.
\end{proof}
By choosing $\delta_1$ small enough (depending on $\delta_2$ fixed in Lemma~\ref{compar}), 
we obtain that
$\lim_{n\rightarrow \infty}\|v_n(t_n,\cdot)\|_{H^s}=\infty$
which implies that
$\lim_{n\rightarrow \infty}\|u_n(t_n,\cdot)\|_{H^s}=\infty$.
This completes the proof of Proposition~\ref{ill}.
\end{proof}
We now can show that these solutions we just constructed can be glued together to give the following statement.
\begin{proposition}\label{prop.ill}
Let us fix $s\in ]0,1/2[$. There exists an initial data $f= (f_1,f_2) \in \mathcal{H}^s(M)$ such there exists no solution of~\eqref{1} in $L^ \infty((-T,T); \mathcal{H}^s(M))$, $T>0$  
with initial data $f$ satisfying in addition the finite speed of propagation.
\end{proposition}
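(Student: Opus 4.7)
The plan is to build the pathological initial data as a superposition of rescaled ``bad'' profiles from Proposition~\ref{ill}, placed at a sequence of distinct points in $M$ accumulating at some $x_\infty$. Since the profiles from Proposition~\ref{ill} are concentrated in balls of size $\sim 1/n$ around a point, and blow up in $\mathcal{H}^s$ at times $t_n \to 0$, finite speed of propagation will prevent these different profiles from interacting and will force any putative solution to inherit the blow-up of each piece.

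Concretely, I would first select a sufficiently rapid subsequence $n_k \to \infty$ so that $\sum_k \log(n_k)^{-2\delta_1} < \infty$ (for example $n_k = \exp(k^{1/\delta_1})$), where $\delta_1$ is the constant from the proof of Proposition~\ref{ill}. For each $k$, let $u_{n_k}$ be the smooth global solution provided by Proposition~\ref{ill}, with initial data $g_{n_k}=\kappa_{n_k} n_k^{3/2-s}\varphi(n_k \cdot)$ in local coordinates near some base point. I then pick a sequence of distinct points $x_k \in M$ converging to a limit $x_\infty$ such that the balls $B(x_k, r_k)$ with $r_k = C/n_k + t_{n_k}$ are pairwise disjoint and small enough that the construction of Proposition~\ref{ill} transplants without difficulty to a neighborhood of each $x_k$. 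Define
$$
f = \sum_k \bigl(\tau_{x_k} g_{n_k},\,0\bigr) \in \mathcal{H}^s(M),
$$
where $\tau_{x_k}$ denotes transplantation via local coordinates at $x_k$. Because the pieces are localized in disjoint sets and live at rapidly separated frequency scales, an almost-orthogonality argument gives
$$
\|f\|_{\mathcal{H}^s(M)}^2 \leq C \sum_k \|g_{n_k}\|_{H^s}^2 \leq C \sum_k \log(n_k)^{-2\delta_1} < \infty.
$$

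Now suppose, for contradiction, that $u \in L^\infty((-T,T); \mathcal{H}^s(M))$ is a solution of~\eqref{1} with initial data $f$ satisfying finite speed of propagation. For each $k$ with $t_{n_k} < T$, the restriction of $f$ to $B(x_k, r_k)$ coincides with $\tau_{x_k}(g_{n_k},0)$, and finite speed of propagation applied to both $u$ and the locally transplanted smooth solution $\tau_{x_k} u_{n_k}$ (both of which are reasonable solutions with the same initial data inside $B(x_k, r_k)$) forces $u = \tau_{x_k} u_{n_k}$ throughout the cone $\{(t,x): d(x, x_k) + |t| \leq r_k\}$. Choosing a cutoff $\chi_k$ equal to $1$ on a ball containing the support of $u_{n_k}(t_{n_k})$ but contained in the slice $\{d(x,x_k) \leq r_k - t_{n_k}\}$, we obtain
$$
\|u(t_{n_k})\|_{H^s(M)} \geq \tfrac{1}{C}\|\chi_k u(t_{n_k})\|_{H^s(M)} = \tfrac{1}{C}\|u_{n_k}(t_{n_k})\|_{H^s(M)} \geq c \log(n_k)^{\delta}.
$$
Letting $k \to \infty$ (so $t_{n_k} \to 0$, in particular $t_{n_k} \in (0,T)$ for $k$ large), this contradicts $u \in L^\infty((-T,T); \mathcal{H}^s(M))$.

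The main technical obstacle is the almost-orthogonality step verifying $f \in \mathcal{H}^s(M)$: one must exploit both the spatial separation of the supports and the dyadic separation of the frequency scales $n_k$, since $x_k \to x_\infty$ prevents a purely spatial argument from working cleanly. A convenient formulation is to estimate directly in the Littlewood--Paley decomposition, noting that each $g_{n_k}$ is essentially frequency-localized at scale $n_k$, so different $k$'s contribute to essentially different frequency shells. The other delicate point is making rigorous the identification of $u$ with the transplanted $u_{n_k}$ near $x_k$, but this is exactly what the hypothesis of finite speed of propagation on any ``reasonable'' solution is designed to ensure, combined with the uniqueness of the smooth solution $u_{n_k}$ to the localized problem.
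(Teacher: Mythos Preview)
Your proposal is correct and follows essentially the same route as the paper: superpose translated copies of the profiles from Proposition~\ref{ill} at a sequence of points accumulating at one point, use finite propagation speed to identify the putative solution with each local smooth solution on the relevant cone, and read off the blow-up of the $H^s$ norm along the sequence of times $t_{n_k}\to 0$.

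Two points where the paper's argument is more economical than yours:

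\emph{Convergence of $f$ in $\mathcal{H}^s$.} The paper avoids any almost-orthogonality. It simply chooses the subsequence $(n_k)$ so rapidly that $\|u_{n_k}(0,\cdot)\|_{H^s}\leq 2^{-k}$; the triangle inequality then gives $f\in\mathcal{H}^s$ immediately. Your Littlewood--Paley route would also work, but it is an unnecessary complication, and your heuristic that ``each $g_{n_k}$ is essentially frequency-localized at scale $n_k$'' is not quite accurate (the Fourier support of $g_{n_k}$ is $\{|\xi|\lesssim n_k\}$, not $\{|\xi|\sim n_k\}$), so the details would require some care. Just take a faster subsequence.

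\emph{Extraction of the local piece.} Instead of a smooth cutoff $\chi_k$, the paper uses a small lemma (Lemma~\ref{restric}): for $0\leq s<1/2$, restriction to a slab $K_n$ (i.e.\ multiplication by its indicator) is bounded on $H^s$ with constant independent of $n$, because multiplication by the Heaviside function is continuous on $H^s$ for $|s|<1/2$. It then uses $\|u(t_{n_k})\|_{H^s(M)}\geq C\|u(t_{n_k})\|_{H^s(K_{n_k})}=C\|u_{n_k}(t_{n_k})\|_{H^s(K_{n_k})}\to\infty$. Your smooth cutoff also works in dimension $3$ (the $H^s\to H^s$ norm of multiplication by $\chi(n_k\,\cdot)$ is uniformly bounded for $0\leq s\leq 1$, by the $H^1$ case via Sobolev $H^1\hookrightarrow L^6$ and interpolation), but you should state and justify this uniformity explicitly rather than leave it implicit; and you need to choose $r_k$ with a little more room (e.g.\ $r_k=2C/n_k$) so that the support of $u_{n_k}(t_{n_k})$ indeed sits inside the identification slice $\{d(x,x_k)\leq r_k-t_{n_k}\}$.
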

\begin{proof} 
We use the notations introduced in the proof of the previous proposition and consider solutions 
$u_n$ as constructed above, but, working in local coordinates, with initial data centered at points 
$x_n= (x_{n,1}, x_n'=0)$ with a sequence $x_{n,1}$ converging to $0$ to be specified later. 
As a consequence, the support of the initial data of $u_n$ is included in the set  
$$\{x= (x_1, x')\in \mathbb{R}^3\,:\, |x_1- x_{n,1}| +|x'| \leq \frac C n\}.$$
Furthermore, the explicit form of $v_n$ (and the fact that $t_n \ll n^{-1}$) shows that if 
$$ K_n=\{x\,:\, |x_1-x_{n,1}| \leq \frac {C} {2n} -Ct_n\}$$ then 
$$ \|u_n(t_n, \cdot)\|_{H^{s} (K_n)}\rightarrow + \infty.$$
Remark also that we have 
\begin{lemme}\label{restric}
For any $0\leq s <1/2$, there exists $C>0$ such that 
$$ \forall\, n, \,\, \forall\, u \in H^s(M), \quad   \|u \|_{H^s(M)} \geq C \|u\mid_{K_n} \|_{H^s(K_n)}\,.
$$
\end{lemme}
\begin{proof}
Indeed, the multiplication by the Heaviside function is continuous on $H^s(M)$ (because $-1/2<s<1/2$ and
$$ 
u\mid_{K_n}= u \times 1_{-\frac {C} {2 n}+ Ct_n<x_1-x_{n,1}<\frac {C} {2 n} -Ct_n}\,.
$$
\end{proof}
We now consider a sequence $(n_k)_{k\in \mathbb{N}}$
such that
\begin{itemize}
\item $ \|u_{n_k}(0,\cdot)\|_{H^s(M)}\leq 2^{-k}$,
\item $n_k\leq 2^{-k}$.
\end{itemize}
Remark also that all the estimates on the functions $u_{n_k}$ are  independent of the choice of the 
sequence $x_{n,1}$ (because the bounds on $u_n$ we have are independent of the
choice of the concentration point), 
and consequently, we can assume that $x_{n_k,1}=\frac 1 {k^2}$. Consider now as initial data 
$$ 
f= (f_1=\sum_{k\geq k_0} u_{n_k}(0), f_2=0),
$$
where $k_0\geq 1$ is a large constant.
The support of the function $f_1$ is included in the union of the balls of radius $C2^{-k}$ 
centered at $(\frac 1 {k^2},0,0)$. 
If $k_0$ is large enough,
any solution $u$ of the non linear wave equation with initial data $f$, 
satisfying the finite speed of propagation will consequently coincide with the solutions 
$u_{n_k}$, $k\geq k_0$ we just constructed on the cone $ K_{n_k}$ (notice that for $k_0\gg 1$, the cones 
$K_{n_k}$, $k\geq k_0$ are disjoint).
As a consequence, these solutions will satisfy (using Lemma~\ref{restric})
$$ \|u(t_{n_k},\cdot)\|_{\mathcal{H}^s(M)} \geq C\|u(t_{n_k},\cdot)\|_{\mathcal{H}^s(K_{n_k})}
\rightarrow_{k\rightarrow + \infty} + \infty
$$
and consequently
$$ 
\limsup_{t\rightarrow 0^+}\|u(t,\cdot)\|_{\mathcal{H}^s(M)}= + \infty \,.
$$
This ends the proof of Proposition~\ref{prop.ill}
\end{proof}
\section{Lack of $H^s$ regularization under the considered randomization}
The goal of this appendix is to give the proof of the following lemma.
\begin{lemme}\label{last}
Let 
$$
f=\sum_{n=1}^{\infty}\alpha_n e_{n}(x)\in H^{s}(M)
$$ 
be such that for some $\varepsilon>0$ one has that $f$ does not belong to 
$H^{s+\varepsilon}(M)$. Let $(l_n(\omega))_{n=1}^{\infty}$ be a sequence of independent random 
variables such that there exists $c>0$ satisfying
$$\limsup_{n\rightarrow + \infty}p(\{|l_n| \leq c\})<1,$$
(notice that this assumption is fulfilled if the random variables are
identically distributed and non identically zero). 
If we set
$$
f^{\omega}=\sum_{n=1}^{\infty}l_{n}(\omega)\alpha_n e_{n}(x),
$$
then we have that $f^{\omega}$ belongs to $H^{s+\varepsilon}(M)$ with probability zero.
\end{lemme}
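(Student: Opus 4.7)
The plan is to reformulate membership of $f^\omega$ in $H^{s+\varepsilon}(M)$ as convergence of an explicit nonnegative random series in the coefficients, observe that this is a tail event for the independent sequence $(l_n)$, and then use the distributional assumption to force almost sure divergence.

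Setting $c_n=(1+\lambda_n^2)^{s+\varepsilon}|\alpha_n|^2$, the hypothesis $f\notin H^{s+\varepsilon}(M)$ reads $\sum_n c_n=+\infty$, while $f^\omega\in H^{s+\varepsilon}(M)$ amounts to $S(\omega):=\sum_n c_n|l_n(\omega)|^2<+\infty$. Since altering finitely many $l_n$'s does not affect convergence of this series, $\{S<\infty\}$ lies in the tail $\sigma$-algebra of the independent sequence $(l_n)_{n\geq 1}$, so Kolmogorov's zero-one law gives $p(\{S<\infty\})\in\{0,1\}$.

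To rule out $p(\{S<\infty\})=1$, I would exploit the distributional assumption: there exist $c,\delta>0$ and $N_0$ such that $p(\{|l_n|>c\})\geq\delta$ for all $n\geq N_0$. Introduce the independent $[0,1]$-valued random variables
$$
X_n:=\min(c_n,1)\,\mathbf{1}_{\{|l_n|>c\}},\qquad n\geq N_0.
$$
Then $c_n|l_n|^2\geq c^2 X_n$, so $S\geq c^2\sum_{n\geq N_0}X_n$. A short case split gives $\sum_n\min(c_n,1)=+\infty$: either $c_n\geq 1$ for infinitely many $n$, in which case the subseries of $1$'s already diverges, or else $c_n<1$ eventually and then $\sum\min(c_n,1)$ and $\sum c_n$ differ by a finite quantity. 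Consequently
$$
\sum_{n\geq N_0} E[X_n]\;\geq\;\delta\sum_n \min(c_n,1)\;=\;+\infty.
$$

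The final step, deducing $\sum X_n=+\infty$ almost surely from $\sum E[X_n]=+\infty$ for independent $X_n\in[0,1]$, is the crux, and I will invoke Kolmogorov's three-series theorem at truncation level $1$. Since the $X_n$ are already bounded by $1$, the truncation is trivial and $\mathrm{Var}(X_n)\leq E[X_n^2]\leq E[X_n]$, so the variance condition is automatic given the expectation condition; the three-series theorem therefore asserts that $\sum X_n$ can converge almost surely only if $\sum E[X_n]<\infty$. Combined with the zero-one dichotomy above this forces $\sum X_n=+\infty$ almost surely, hence $S=+\infty$ a.s., i.e.\ $p(f^\omega\in H^{s+\varepsilon}(M))=0$. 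The main obstacle is precisely this last step: a direct application of the second Borel--Cantelli lemma to the events $\{|l_n|>c\}$ would not suffice because the weights $c_n$ are possibly unbounded, which is exactly why the truncation $\min(c_n,1)$ (followed by the three-series theorem for bounded independent summands) is needed.
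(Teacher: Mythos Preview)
Your proof is correct but takes a genuinely different route from the paper's. The paper argues via an exponential-moment computation: writing $S(\omega)=\|f^\omega\|^2_{H^{s+\varepsilon}}$, it factors $\int_\Omega e^{-S}\,dp=\prod_n\int e^{-c_n x^2}\,d\mu_n(x)$ by independence, bounds each factor for large $n$ by $1-\delta(1-e^{-c^2 c_n})$, and then uses $\sum_n(1-e^{-c^2 c_n})=\infty$ (equivalent to $\sum_n c_n=\infty$) to conclude that the infinite product, hence $E[e^{-S}]$, vanishes; this immediately gives $S=\infty$ a.s. That argument is shorter and fully self-contained, invoking no named probability theorems. Your approach, via the zero-one law plus the three-series theorem applied to the truncated variables $X_n=\min(c_n,1)\mathbf{1}_{\{|l_n|>c\}}$, is a bit longer but more structural: it makes explicit that the result is a tail dichotomy and isolates exactly which convergence criterion fails. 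One small cleanup: the ``zero-one dichotomy above'' you cite at the end was stated for $\{S<\infty\}$, not for $\{\sum X_n<\infty\}$; the cleanest way to close is to note that the three-series theorem gives $p(\sum X_n<\infty)<1$, hence $p(S=\infty)\geq p(\sum X_n=\infty)>0$, and then your zero-one law for $S$ forces $p(S=\infty)=1$.
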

\begin{proof}
A similar argument is given in \cite{BuTz07}.
Denote by $\mu_n$ the distribution of $(l_n(\omega))_{n=1}^{\infty}$.
By assumption there exists $c, \delta>0$ such that $\mu_n([-c,c])\leq (1- \delta)$. 
Then, we can write (with $\rho_n=e^{-c^2\lambda_{n}^{2(s+\varepsilon)}\alpha_{n}^{2}}$) 
\begin{multline*}
\int_{\Omega}
e^{-\|f^{\omega}\|^{2}_{H^{s+\varepsilon}(M)}}dp(\omega)=
\prod_{n=1}^{\infty}
\int e^{-\lambda_{n}^{2(s+\varepsilon)}\alpha_{n}^{2}x^2}d\mu_n(x)\\
= \int_{-c}^{c}e^{-\lambda_{n}^{2(s+\varepsilon)}\alpha_{n}^{2}x^2}d\mu_n(x)+\int_{|x|\geq c}e^{-\lambda_{n}^{2(s+\varepsilon)}\alpha_{n}^{2}x^2}d\mu_n(x)\\
\leq
\prod_{n=1}^{\infty}\Big(\mu_n(-c,c)+\rho_n(1-\mu_n(-c,c))\Big)
=
\prod_{n=1}^{\infty}\Big(\mu_n(-c,c)(1-\rho_n)+ \rho_n\Big)\\
\leq \prod_{n=1}^{\infty}\Big((1- \delta)(1-\rho_n)+ \rho_n\Big)
\leq \prod_{n=1}^{\infty}\Big(1- \delta(1-\rho_n)\Big).
\end{multline*}
Since by assumption $\sum_{n}\lambda_{n}^{2(s+\varepsilon)}\alpha_{n}^{2}=\infty$, we obtain that
$
\sum_{n=1}^{\infty}(1-e^{-c^2\lambda_{n}^{2(s+\varepsilon)}\alpha_{n}^{2}})=\infty
$
and therefore
$$
\prod_{n=1}^{\infty}\Big(1- \delta(1-e^{-c^2\lambda_{n}^{2(s+\varepsilon)}\alpha_{n}^{2}})=0\quad \Rightarrow \quad  \int_{\Omega}e^{-\|f^{\omega}\|^{2}_{H^{s+ \varepsilon}(M)}}dp(\omega)=0\,.
$$
This implies that $\|f^{\omega}\|^{2}_{H^{s+\varepsilon}(M)}=\infty$ almost surely.
This completes the proof of Lemma~\ref{last}.
\end{proof}

\end{document}